\documentclass[11pt,letterpaper,reqno]{amsart}

\usepackage{amsfonts}
\usepackage{graphicx}
\usepackage[normalem]{ulem}
\usepackage{xcolor}
\usepackage{bm} 
\usepackage{tikz}
\usepackage{amsmath, amssymb, amsthm}
\usepackage{latexsym}
\usepackage[utf8x]{inputenc}
\usepackage[T1]{fontenc}
\usepackage{fancyhdr}
\usepackage[colorlinks=true,urlcolor=blue]{hyperref}

\usepackage[margin=1in]{geometry}
\usepackage{todonotes}
\usepackage{mathtools}
\usetikzlibrary{arrows.meta,calc,angles,quotes,intersections}
\usepackage{comment}
\usepackage{environ}

\newif\ifshowexplanations
\showexplanationsfalse

\NewEnviron{explanation}{%
  \ifshowexplanations
    \par\smallskip
    {\color{blue}\BODY}%
    \par\smallskip
  \fi
}

\newcommand{\R}{\mathbb{R}}

\newcommand{\inner}[2]{\langle #1,#2\rangle}

\newcommand{\dd}{\,\mathrm{d}}

\newtheorem{theorem}{Theorem}[section]
\newtheorem{lemma}[theorem]{Lemma}
\newtheorem{proposition}[theorem]{Proposition}
\newtheorem{corollary}[theorem]{Corollary}

\newtheorem{remark}[theorem]{Remark}

\providecommand{\R}{\mathbb{R}}

\providecommand{\Sph}{\mathbb{S}}
\providecommand{\supp}{h} 
\providecommand{\width}{w} 
\providecommand{\Area}{\mathrm{Area}}

\title{Area Minimization Among Group-Invariant Planar Convex Bodies of Constant Width}

\author[J. Falc\'o, S. Myroshnychenko]{Javier Falc\'o, Sergii Myroshnychenko}

\date{\today}

\address{J. Falc\'o, An\`alisis Matem\'atico, Universidad de Valencia, Department of Mathematical Analysis, C/ Dr. Moliner, 50, 46100 Burjassot, Spain}
\email{francisco.j.falco@uv.es}

\address{S.~Myroshnychenko, Department of Mathematics and Statistics,  University of the Fraser Valley, Abbotsford, BC V2S 7M8,  Canada}
\email{serhii.myroshnychenko@ufv.ca}

\thanks{The first author was supported by grant PID2021-122126NB-C33 funded by MICIU/AEI/10.13039/501100011033 and by ERDF/EU. The second author was supported in part by NSERC}

\begin{explanation}
\tableofcontents
\end{explanation}

\subjclass[2020]{Primary 52A10, 52A40.}
\keywords{Convex body, constant width, group invariance.}

\begin{document}

\begin{abstract}
The classical Blaschke--Lebesgue theorem identifies the Reuleaux triangle as the planar convex body of constant width with minimum area. We investigate this extremal problem under prescribed symmetry constraints. Specifically, we classify the minimum-area convex bodies of constant width that are invariant under a finite group $G$ of isometries of $\mathbb{R}^2$ fixing the origin. For the exceptional reflection group $D_1$, the minimizers are precisely the Reuleaux triangles invariant under the prescribed reflection. If $G$ contains the half-turn $\mathcal R_\pi$, the disk is the unique minimizer. For odd $n\geq 3$, the minimizers are regular Reuleaux $n$-gons, unique up to rotation in the cyclic case $C_n$, and exactly those satisfying the prescribed reflection symmetry in the dihedral case $D_n$.
\end{abstract}

\maketitle

\section{Introduction}
Convex bodies of constant width constitute a classical yet remarkably rich class in convex geometry \cite{Martini2019}. Characterized by having the same width in every direction, they arise naturally in applications ranging from mechanical design \cite{ZCJ} to kinematics \cite{Reuleaux}, while displaying a striking diversity of geometric structures and extremal phenomena.

We work in the planar setting. Recall that a convex body \(K \subset \R^2\) is a compact convex set with non-empty interior. For a direction $u$ on the unit circle \(\Sph^1\) and the standard scalar product $\inner{\cdot}{\cdot}$, the support function of $K$ in the direction $u$ is
$
\supp(K,u)=\max_{x \in K}\inner{x}{u},
$
and the width of \(K\) in direction \(u\) is
$
\width(K,u)=\supp(K,u)+\supp(K,-u).
$
We say \(K\) has constant width \(w>0\) if \(\width(K,u)\equiv w\) for every \(u\in\Sph^1\). The most familiar examples are the disk $B_w$ of diameter \(w\) and the Reuleaux $n$-gons $R_n(w)$ of width~\(w\).

The classical Blaschke--Lebesgue theorem \cite{Blaschke1915,Firey1960, Hynd2024,Lebesgue1914} characterizes the extremal area properties of convex bodies of constant width $w$. It states that the Reuleaux triangle uniquely minimizes area. On the opposite extreme, it is well known that as a consequence of the isoperimetric inequality, the disk $B_w$ of diameter $w$ uniquely maximizes area. Specifically, for any convex body $K$ of constant width $w$,
\[
\Area(R_3(w)) \leq \Area(K) \leq \Area(B_w).
\]
Indeed, the upper bound follows from Barbier's theorem \cite{Ba} (which asserts that every planar convex body of constant width $w$ has perimeter $\pi w$) together with the classical isoperimetric inequality, \cite[Section 7.2]{Schneider2014}.

In this work, we investigate how the area-minimization property is affected when symmetry constraints are imposed. Specifically, we consider convex bodies of constant width that are invariant under the action of a finite group $G$ of isometries of $\R^2$ fixing the origin. Note that by translating any finite group of Euclidean isometries to one of its common fixed points, the assumption that the group fixes the origin entails no loss of generality. Such groups define a natural action on subsets of $\R^2$ by
\[
G \times \mathcal{P}(\R^2) \to \mathcal{P}(\R^2), \qquad (g, K) \mapsto g \cdot K := \{g(x) : x \in K\},
\]
where $\mathcal{P}(\R^2)$ denotes the power set of $\R^2$. A set $K \subset \R^2$ is $G$-invariant if $g \cdot K = K$ for all $g \in G$.

Finite subgroups of the group of isometries of \(\R^2\) fixing the origin are precisely the cyclic and dihedral groups. It is a classical result (see, e.g., \cite[Theorem 19.1]{Armstrong1988}) that any such non-trivial group \(G\) must be either:
\begin{itemize}
    \item A cyclic group \(C_n\), generated by a rotation \(\mathcal R_{2\pi/n}\) of angle \(2\pi/n\), with \(n\ge 2\), $n \in \mathbb{N}$, or
    \item A dihedral group \(D_n\) with \(n\ge 1\), $n \in \mathbb{N}$, generated by \(\mathcal R_{2\pi/n}\) together with a reflection \(S\) across a line through the origin (which we take to be the horizontal line \(\{y = 0\}\) for simplicity). For \(n=1\), this means simply \(D_1=\{I,S\}\), the group generated by the identity $I$ and a reflection $S$.
\end{itemize}
For \(n\ge 2\), these groups arise as the symmetry group of a regular \(n\)-gon centered at the origin. The cyclic group \(C_n\) corresponds to the rotational symmetries of such a polygon, while the dihedral group \(D_n\) includes both rotations and reflections. The exceptional case \(D_1\) is the reflection-only group.
Our main result provides a complete classification.

\begin{theorem}\label{thm:main}
Let \( G \) be a non-trivial finite subgroup of isometries of \( \R^2 \) fixing the origin, and let \( w > 0 \). Then the minimizers among the \( G \)-invariant convex bodies of constant width \( w \) are given as follows:
\begin{itemize}
    \item If \( G = D_1 \), then the minimizers are precisely the regular Reuleaux triangles $R_3(w)$ that are invariant under the prescribed reflection.
    \item If \( G \) contains the rotation \( \mathcal R_\pi \) by angle \( \pi \) (equivalently, if \( G = C_n \) or \( G = D_n \) with \( n \) even), then the unique minimizer is the disk \( B_w \) of diameter \( w \).
    \item If \( G = C_n \) with odd \( n\ge 3 \), then the minimizers are precisely the centered regular Reuleaux \( n \)-gons of width \( w \); equivalently, the minimizer is unique up to rotation.
    \item If \(G=D_n\) with odd \(n\ge3\), then the minimizers are exactly the two centered regular Reuleaux \(n\)-gons of width \(w\) invariant under the prescribed dihedral action. Equivalently, they are the two orientations for which the prescribed reflection axis is a symmetry axis of the Reuleaux polygon.
\end{itemize}
\end{theorem}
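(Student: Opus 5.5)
We treat the four cases separately, working throughout with the support-function description. A planar convex body of constant width $w$ corresponds to a support function $h(\theta)$ with $h(\theta)+h(\theta+\pi)=w$ and $h+h''\ge 0$ as a measure. Its area is
\[
\Area(K)=\tfrac12\int_0^{2\pi}(h^2-h'^2)\dd\theta .
\]
Write $h=\tfrac w2+p$, where $p$ contains only odd Fourier modes. Then
\[
\Area(K)=\frac{\pi w^2}{4}-\frac12\int_0^{2\pi}(p'^2-p^2)\dd\theta=\frac{\pi w^2}{4}-\pi\sum_{k\ \mathrm{odd}}(k^2-1)(a_k^2+b_k^2).
\]

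\textbf{Case: $\mathcal R_\pi\in G$.} Invariance gives $h(\theta+\pi)=h(\theta)$, so $h\equiv w/2$ and $K=B_w$ (centered). There is nothing to minimize, and uniqueness is immediate.

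\textbf{Case: $G=D_1$.} By Blaschke--Lebesgue, every constant-width body has area at least $\Area(R_3(w))$, with equality only for Reuleaux triangles. A Reuleaux triangle that is symmetric about the horizontal axis (suitably placed, e.g.\ with a vertex on the axis) is $D_1$-invariant, so the bound is attained. The minimizers are therefore exactly the $S$-invariant Reuleaux triangles. Here I will need to check that $S$-invariance forces one symmetry axis of the triangle to coincide with the line $y=0$. This holds because the symmetry group of a Reuleaux triangle is $D_3$, so any reflection preserving it is one of its three axes.

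\textbf{Case: $G=C_n$ or $D_n$ with odd $n\ge3$.} This is the heart of the proof. $C_n$-invariance means $h(\theta+2\pi/n)=h(\theta)$, so only Fourier modes that are multiples of $n$ survive, and $p$ has only odd multiples of $n$. Rescaling $\phi=n\theta$, set $h(\theta)=\tfrac w2+q(n\theta)$, where $q$ is $2\pi$-periodic with only odd modes. The constraint $h+h''\ge0$ becomes $\tfrac w2+q+n^2q''\ge0$. I would run a Blaschke--Lebesgue-type argument adapted to this period. The cleanest route I see is the classical approach via the radius of curvature $\rho=h+h''$. We have $\rho\ge 0$, $\rho(\theta)+\rho(\theta+\pi)=w$, hence $0\le\rho\le w$. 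The area equals $\tfrac12\int h\rho$, which by the above is a concave quadratic functional of $\rho$. We minimize it over the convex set of admissible $\rho$ (bounded between $0$ and $w$, with the antipodal relation, the closure conditions $\int\rho\cos=\int\rho\sin=0$, and $2\pi/n$-periodicity). A concave functional on a convex, weak-* compact set attains its minimum at extreme points. These are the bang-bang functions $\rho\in\{0,w\}$, i.e.\ Reuleaux polygons (not necessarily regular) that are $C_n$-invariant.

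Then I reduce within Reuleaux polygons. A $C_n$-invariant Reuleaux polygon has a number of vertices $m$ that is an odd multiple of $n$, since the vertex set is invariant. Among Reuleaux $m$-gons of width $w$, the regular ones minimize area (the Blaschke--Lebesgue/Firey–Sallee comparison). The area of a regular Reuleaux $m$-gon increases with $m$, so $m=n$ is optimal. I still need the fact that a $C_n$-invariant Reuleaux $n$-gon is automatically regular: its vertices form one orbit, hence a regular $n$-gon. Equality analysis gives that the minimizers are exactly the centered regular Reuleaux $n$-gons, which are unique up to rotation. For $D_n$, the same bound applies because these groups are contained in the $C_n$ ones. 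The regular Reuleaux $n$-gons invariant under $S$ are those whose symmetry axis is horizontal. Since the symmetry group of the $n$-gon is $D_n$ with $n$ odd, this gives the two orientations: vertex on the positive or on the negative $x$-axis.

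The main obstacle is making the extreme-point step rigorous while respecting invariance. To get $C_n$-invariant extreme points, I would parametrize $\rho$ on a fundamental interval of length $\pi/n$. The antipodal relation combined with the odd period $2\pi/n$ determines the rest, and the closure conditions then hold automatically by symmetry. The second obstacle is the comparison among invariant Reuleaux polygons with $m>n$ vertices. If citing Firey/Sallee for fixed $m$ is not acceptable, I would try a direct Fourier estimate instead. With $q$ having only modes $\ge n$ in absolute value, the curvature constraint bounds the coefficients. Using the coefficient formula for regular $R_n$, one would aim to show that $\sum(k^2-1)|c_k|^2$ is maximized by the bang-bang $\rho$ with exactly one switch per half-period $\pi/n$.
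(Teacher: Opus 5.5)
\textbf{There is a genuine gap in the odd case.}

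The step that carries the whole odd case is false as stated. You claim that among Reuleaux $m$-gons of width $w$ the regular one \emph{minimizes} area, citing Firey--Sallee. Their theorem goes the other way: the regular Reuleaux $m$-gon \emph{maximizes} area among Reuleaux $m$-gons of width $w$. For $m\ge5$ there is no minimizer among $m$-gons at all, because Reuleaux $m$-gons can degenerate to a Reuleaux triangle. For example, in a pentagon $V_0,\dots,V_4$ let $V_0$ and $V_1$ coalesce; the limit is the Reuleaux triangle $V_0V_2V_4$. So the infimum over $m$-gons is $\Area(R_3(w))$, and your chain $\Area(K)\ge\Area(R_m(w))\ge\Area(R_n(w))$ breaks at its first link. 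What actually has to be proved is that $C_n$-symmetry rules out this kind of degeneration, i.e.\ a Blaschke--Lebesgue inequality inside the $C_n$-invariant class. That is the theorem itself. Your fallback (``aim to show that $\sum(k^2-1)|c_k|^2$ is maximized by the profile with one switch per half-period'') restates this goal without supplying a mechanism.

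There is a second gap at the same spot. A bang-bang profile $\rho=w\chi_E$ only requires $E$ to be measurable with $E+\pi=E^c$ and $E+2\pi/n=E$. It may therefore have infinitely many switches, and such extreme points are not Reuleaux polygons. A polygon-by-polygon comparison plus density would by itself give only the inequality, not the characterization of the minimizers. Two minor points: it is \emph{strict} concavity, i.e.\ positive definiteness of $J$, that makes every minimizer an extreme point; and the deficit is $\frac{\pi}{2}\sum(k^2-1)(a_k^2+b_k^2)$, not $\pi\sum$.

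The missing idea, which is how the paper closes the argument, is to rescale $x=n\theta$. Since $\frac{\pi}{n}=\pi-\frac{2\ell\pi}{n}$ for $n=2\ell+1$, the antipodal and $C_n$ relations merge into the single condition $g(x+\pi)=-g(x)$ for $g(x)=\frac{2}{w}f(x/n)$. Thus $g=2\chi_E-1$ with $E\subset\mathbb T$ an arbitrary measurable set satisfying $E+\pi=E^c$ and $|E|=\pi$. Because the kernel has zero mean, the functional becomes
\[
\mathcal J(f)=\frac{w^2}{2\pi}\int_E\int_E H_n(x-y)\,\dd x\,\dd y,
\qquad
H_n(t)=\frac{\pi}{4n\cos\frac{\pi}{2n}}\,\sin\frac{\pi-2t}{2n}\quad(0\le t\le\pi),
\]
and $H_n$ is even and strictly decreasing on $(0,\pi)$. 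Add a constant to make the kernel nonnegative; this does not change the maximizers since $|E|$ is fixed. The Riesz rearrangement inequality on the circle, together with its equality case, then forces $E$ to be a half-circle, i.e.\ $\rho=\frac{w}{2}\bigl(1+\operatorname{sgn}\cos(n\theta-x_0)\bigr)$. This profile reconstructs to $R_n(w)$. Since the argument applies to arbitrary measurable $E$, it removes both gaps at once.

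Your treatment of the $D_1$ case, the half-turn case (where $h(\theta+\pi)=h(\theta)$ plus constant width gives $h\equiv w/2$ directly), and the passage from $C_n$ to $D_n$ is fine.
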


The case of $C_n$ and $D_n$ for even $n$ is a well-known consequence of the classical Blaschke-Lebesgue theorem: a half-turn forces the body to be centrally symmetric, and the only centrally symmetric convex body of constant width is the disk.
The case \(G=D_1\) is an immediate consequence of the classical Blaschke-Lebesgue theorem and is treated separately. 

For the remaining cases, we briefly outline the argument. No smoothness
assumption on $K$ or its support function is required. Although the
classical formulas are first derived under the temporary assumption
$h\in C^2$, they are subsequently extended to arbitrary convex bodies
by a distributional argument and Fejér approximation. Indeed, for a
general support function, convexity means that
\(
\mu:=h+h''
\)
is a nonnegative measure on $\mathbb S^1$. The constant-width identity
$h(\theta)+h(\theta+\pi)=w$ forces $\mu$ to be absolutely continuous:
\[
\mu=\rho(\theta)\,\dd\theta,
\qquad
0\leq\rho\leq w,
\qquad
\rho(\theta)+\rho(\theta+\pi)=w
\quad\text{a.e.},
\]
while $G$-symmetry imposes the corresponding periodicity conditions on
$\rho$. Using Fejér means, we then extend the support-function area
formula
\(
\operatorname{Area}(K)
=
\frac12\int_0^{2\pi}
\bigl(h^2-(h')^2\bigr)\,\dd\theta
\)
and its Fourier representation to these nonsmooth profiles. This
reduces area minimization to the maximization of an explicit quadratic
functional of $\rho$. Finally, a symmetric rearrangement argument shows
that, for odd rotational symmetries, every maximizing profile takes only
the values $0$ and $w$ almost everywhere, and the corresponding body is $R_n(w)$. For more information on the methods of harmonic analysis and distributions, we refer the reader to \cite{Edwards1}, \cite{Edwards2} and references therein.

\section{Notation and Preliminaries}

Recall that the Reuleaux triangle $R_3(w)$ is constructed as follows: begin with an equilateral triangle of side length $w$, then replace each side with a circular arc centered at the opposite vertex and passing through the endpoints of the side. The resulting shape is a convex body of constant width $w$ that is not a disk (see Figure \ref{fig:reuleaux_triangle}). The generalized Reuleaux polygon $R_n(w)$ for odd $n \geq 3$, is constructed analogously by replacing the triangle with the corresponding regular $n$-gon whose vertices lie on a circle of radius $\frac{w}{2 \cos \left( \frac{\pi}{2n}\right)}$. Then every boundary arc has radius $w$, joins consecutive vertices, and is centered at the appropriate opposite vertex.

\begin{figure}[ht]
\centering
\begin{tikzpicture}[scale=2]
  \coordinate (A) at (0,0);
  \coordinate (B) at (1,0);
  \coordinate (C) at (0.5,{sqrt(3)/2});

  \draw[thin,gray,dashed] (A) circle (1);
  \draw[thin,gray,dashed] (B) circle (1);
  \draw[thin,gray,dashed] (C) circle (1);

  \draw[thick] (B) arc[start angle=0,end angle=60,radius=1];
  \draw[thick] (C) arc[start angle=120,end angle=180,radius=1];
  \draw[thick] (A) arc[start angle=240,end angle=300,radius=1];

   \draw[gray] (A) -- (B) -- (C) -- cycle;

  \filldraw (A) circle (0.5pt);
  \filldraw (B) circle (0.5pt);
  \filldraw (C) circle (0.5pt);
  
  \node[above right] at (0.17,0.25) {$w$};
  \node[above right] at (0.53,0.25) {$w$};
  \node[above right] at (0.35,-0.02) {$w$};
  \node[above right] at (0.73,0.52) {$R_3(w)$};
\end{tikzpicture}
\caption{Construction of the Reuleaux triangle $R_3(w)$.}
\label{fig:reuleaux_triangle}
\end{figure}

When we study the classes $G=C_n$ and $G=D_n$, we realize the group as acting about the origin (and, for $D_n$, with reflection axes through the $x$-axis). Thus, the symmetry center is fixed from the outset. 
Throughout we identify unit directions in the plane with angles $\theta\in[0,2\pi)$ and write
\[
u(\theta)=(\cos\theta,\sin\theta),\qquad u^\perp(\theta)=(-\sin\theta,\cos\theta).
\]
We denote by $K\subset\R^2$ a planar convex body (compact, convex set with non-empty interior), by $h$ its support function, and by $w>0$ the constant width.

We recall that the support function $h$ is defined by $h(\theta)=\max_{x\in K}\inner{x}{u(\theta)}$.
Note that the support function $h$ is defined with respect to the origin, which is fixed by the group action. In particular, 
$h(\theta)$ is the signed distance from the origin to the supporting line of $K$ whose outward normal makes angle $\theta$ with the $x$-axis (\cite{Schneider2014}, Section 2.4).
Since $K$ has constant width $w$, we have
$
h(\theta)+h(\theta+\pi)=w
$
for all $\theta \in [0, 2\pi]$.
Also, $h$ is a $2\pi$-periodic function, and we have
\[
K = \bigcap_{\theta \in [0,2\pi)} \{ x \in \mathbb{R}^2 : \langle x, u(\theta) \rangle \le h(\theta) \}.
\]

Recall (e.g., \cite{Ga}) that if we translate $K$ by a vector $t\in\R^2$, then 
\(
h_{K+t}(\theta)=h_K(\theta)+\inner{t}{u(\theta)}
\)
and if we rotate $K$ by angle $\alpha$ about the origin, then
\(
h_{\mathcal R_\alpha K}(\theta)=h_K(\theta-\alpha).
\)

\begin{explanation}
    \begin{proof}
For any $x\in K$, we have $x+t\in K+t$, so
\[
h_{K+t}(\theta)=\max_{y\in K+t}\inner{y}{u(\theta)}=\max_{x\in K}\inner{x+t}{u(\theta)}=\max_{x\in K}\inner{x}{u(\theta)}+\inner{t}{u(\theta)}=h_K(\theta)+\inner{t}{u(\theta)}.
\]
Let $R_\alpha$ be the rotation matrix by angle $\alpha$. Then
\[
h_{R_\alpha K}(\theta)=\max_{y\in R_\alpha K}\inner{y}{u(\theta)}=\max_{x\in K}\inner{R_\alpha x}{u(\theta)}=\max_{x\in K}\inner{x}{R_\alpha^{-1}u(\theta)}=\max_{x\in K}\inner{x}{u(\theta-\alpha)}=h_K(\theta-\alpha).
\]
\end{proof}

For completeness, we recall the construction of the generalized Reuleaux polygon. For any odd integer \(n \geq 3\), let \(P_n\) be a regular \(n\)-gon with vertices \(V_0,\dots,V_{n-1}\) inscribed in a circle centered at the origin. The \emph{regular Reuleaux \(n\)-gon} \(R_n(w)\) of width \(w\) is obtained by replacing each side \([V_j,V_{j+1}]\) of \(P_n\) with the circular arc of radius \(w\) centered at \(V_{j+(n+1)/2}\), where indices are taken modulo \(n\). This is the unique vertex of \(P_n\) equidistant from the endpoints of the side \([V_j,V_{j+1}]\). Here, $w$ equals the diameter of $P_n$. The resulting shape is a convex body of constant width \(w\) that is invariant under the action of both \(C_n\) and \(D_n\). Note that this construction is only valid for odd values of \(n\).
\end{explanation}

\begin{remark}
    First, we derive the geometric identities under the $C^2$ assumption on $h$. Later, after reformulating the problem in terms of the curvature profile $\rho$, we remove this regularity assumption by approximating admissible profiles with their Fej\'er means, \cite{Edwards1}. Since the Fej\'er operator is convolution on the circle with a translation-invariant kernel, it preserves the relations $0\leq \rho\leq w$, $\rho(\theta)+\rho(\theta+\pi)=w$, and the symmetry constraints encoded by angular shifts.
\end{remark}

\subsection{Case $G=D_1$}
We first dispose of the exceptional reflection-only case.

\begin{proposition}[The reflection-only case]\label{prop:D1}
Let $G=D_1=\{I,S\}$, where $S$ is reflection across a prescribed line through the origin. Among the $G$-invariant convex bodies of constant width $w$, the minimizers are precisely the Reuleaux triangles of width $w$ that are invariant under $S$, equivalently, those having the prescribed reflection line as one of their symmetry axes.
\end{proposition}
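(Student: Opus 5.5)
The argument is a squeeze between the unconstrained problem and the constrained one, using the classical Blaschke--Lebesgue theorem, which the introduction already takes as known. Both directions need the uniqueness part of that theorem: among all planar convex bodies of constant width $w$, the area is minimized exactly by the Reuleaux triangles $R_3(w)$, in any position.

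The first step is to show that the constrained infimum equals $\Area(R_3(w))$. Take any Reuleaux triangle $T$ of width $w$ whose symmetry axis (say the one through a vertex and the midpoint of the opposite arc) lies on the prescribed line $\ell$ of $S$. Since the reflection across that axis is a symmetry of $T$, we get $S\cdot T=T$, so $T$ is $D_1$-invariant. Hence the infimum over $D_1$-invariant bodies is at most $\Area(R_3(w))$. It is also at least that value, because the $D_1$-invariant bodies form a subclass of all bodies of constant width $w$. So the infimum equals $\Area(R_3(w))$ and is attained.

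The second step characterizes the minimizers. If $K$ is $D_1$-invariant of constant width $w$ with $\Area(K)=\Area(R_3(w))$, the equality case of Blaschke--Lebesgue makes $K$ a Reuleaux triangle. Conversely, every $S$-invariant Reuleaux triangle of width $w$ has this area, so it is a minimizer. It remains to check that a Reuleaux triangle $T$ is $S$-invariant exactly when $\ell$ is one of its symmetry axes.

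\begin{itemize}
\item If $\ell$ is a symmetry axis of $T$, then $S\cdot T=T$ directly.
\item Conversely, suppose $S\cdot T=T$. Then $S$ is an isometry of $T$, so it permutes the three corner points, which are intrinsically the non-smooth boundary points. Hence $S$ maps the vertex triangle $\Delta$ to itself.
\item The only reflections preserving an equilateral triangle are the reflections across its three medians. Therefore $\ell$ is a median of $\Delta$, which is a symmetry axis of $T$.
\end{itemize}

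Note that the translational position of $T$ along $\ell$ is free, since the group fixes only the origin. The statement allows this, as it says "the Reuleaux triangles invariant under $S$" and does not ask for centered ones.

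I expect no real obstacle. The only point needing care is the use of the uniqueness (equality) part of Blaschke--Lebesgue rather than just the inequality; this is classical and is cited in the introduction.
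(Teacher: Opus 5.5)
Your proposal is correct and follows the same route as the paper: the Blaschke--Lebesgue inequality together with its equality case, plus the observation that an $S$-invariant Reuleaux triangle exists, so the bound is attained within the $D_1$-invariant class. Your corner-point argument, showing that $S$-invariance forces the reflection line to be a median of the vertex triangle, fills in a step the paper asserts without proof.
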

\begin{proof}
By the Blaschke–Lebesgue theorem, every admissible body has area at least that of the Reuleaux triangle, with equality only for a Reuleaux triangle. Such a triangle is invariant under the prescribed reflection precisely when one of its symmetry axes coincides with the prescribed reflection axis. Conversely, every such Reuleaux triangle is admissible and attains equality.
\end{proof}


\subsection{Preliminary results}

From now on, we assume that $G$ contains a non-trivial rotation. Thus, $G=C_n$ or $G=D_n$ with $n\ge2$.
\begin{explanation}
Let us begin by deriving the formula for the boundary $\partial K$ in terms of the support function $h$. This is a classical result in convex geometry, but we include the proof for completeness and to set up notation for later sections.
\end{explanation}
 The contact point $\gamma(\theta)$ on the boundary $\partial K$ where the supporting line with normal $u(\theta)$ supports $K$ is the unique point such that $\inner{\gamma(\theta)}{u(\theta)}=h(\theta)$, see Figure \ref{fig:support}. The following theorem gives an explicit formula for $\gamma(\theta)$ in terms of $h$ and its derivative.

\begin{figure}[h]
    \centering
       \begin{tikzpicture}
\node[anchor=south west, inner sep=0] (img) at (0,0){\includegraphics[scale=0.2]{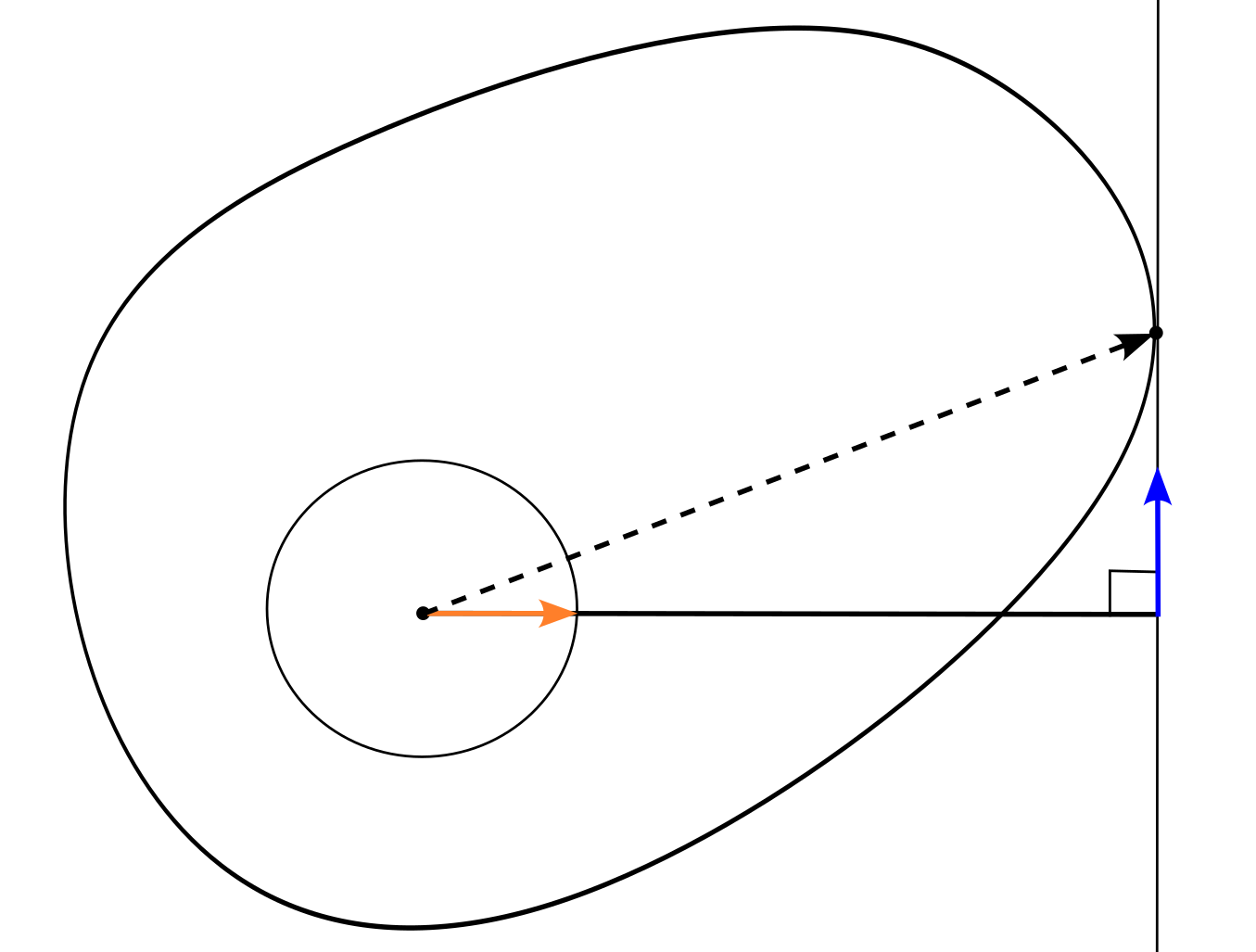}};
\node[above right] at (2.2,1.3) {\textcolor{orange}{$u(\theta)$}};
\node[above right] at (4,1.2) {$h(\theta)$};
\node[above right] at (6.7,2) {$\textcolor{blue}{u^\perp(\theta)}$};
\node[above right] at (6.7,3.2) {$\gamma(\theta)$};
\node[above right] at (1.5,2.6) {$\mathbb{S}^1$};
\node[above right] at (3.13,4) {$K$};
\end{tikzpicture}
\caption{Support function $h(\theta)$ and contact point $\gamma(\theta)$.}
\label{fig:support}
\end{figure}

\begin{theorem}[\cite{Martini2019} 
]
\label{thm:contact}
For $h \in C^2$, the boundary $\partial K$ is parametrized by a $2\pi$-periodic curve $\gamma:[0,2\pi)\to\R^2$, where the point $\gamma(\theta)$ is the unique point on $\partial K$ where the supporting line with normal $u(\theta)$ touches $K$.
This curve is explicitly given by $\gamma(\theta) = h(\theta) u(\theta) + h'(\theta) u^\perp (\theta)$.
\end{theorem}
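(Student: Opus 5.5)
The plan is to work directly from the definition $h(\theta)=\max_{x\in K}\inner{x}{u(\theta)}$ together with the $C^2$ hypothesis. First I will show the contact set is a single point. Fix $\theta$ and let $x\in K$ satisfy $\inner{x}{u(\theta)}=h(\theta)$. For every $\phi$ we have $\inner{x}{u(\phi)}\le h(\phi)$, with equality at $\phi=\theta$. Hence the function
\[
f(\phi)=h(\phi)-\inner{x}{u(\phi)}
\]
is nonnegative, differentiable, and attains its minimum at $\phi=\theta$. It follows that $f'(\theta)=0$, which gives $h'(\theta)=\inner{x}{u^\perp(\theta)}$, because $\frac{d}{d\phi}u(\phi)=u^\perp(\phi)$.

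Since $\{u(\theta),u^\perp(\theta)\}$ is an orthonormal basis, $x$ is then determined by its two coordinates:
\[
x=h(\theta)u(\theta)+h'(\theta)u^\perp(\theta).
\]
So every contact point equals this expression. The contact set is nonempty by compactness of $K$, so the contact point is unique and the formula for $\gamma(\theta)$ follows. In this step uniqueness comes from differentiability of $h$ alone; geometrically, a supporting line meeting $K$ in a segment would produce a corner of $h$.

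Next I will verify that $\gamma$ parametrizes all of $\partial K$. Periodicity and continuity of $\gamma$ are immediate, since $h\in C^1$ and $u,u^\perp$ are $2\pi$-periodic. For surjectivity, every boundary point $p$ of the convex body $K$ has at least one supporting line, with some outward unit normal $u(\theta)$. By the uniqueness just proved, $p=\gamma(\theta)$. Conversely, each $\gamma(\theta)$ lies on a supporting line, hence on $\partial K$.

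Finally, I will record the derivative $\gamma'(\theta)=(h+h'')(\theta)\,u^\perp(\theta)$, which is obtained using $\frac{d}{d\theta}u^\perp=-u$. This shows the curve moves monotonically in the tangent direction whenever $h+h''\ge0$, so $\gamma$ traverses $\partial K$ once counterclockwise. This is the setup needed later for the area formula.

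I expect the main obstacle to be the surjectivity and orientation claim rather than the formula itself. The identification relies on the existence of supporting lines at every boundary point, which is standard, and the monotonicity relies on $h+h''\ge0$ for support functions in $C^2$. I would justify this nonnegativity either by citing the standard characterization in \cite{Schneider2014}, or by noting that $h+h''$ is the radius of curvature of $\partial K$ and that convexity forces it to be nonnegative. Where the curve fails to be injective, for example when $h+h''=0$ on an interval so that $\gamma$ is constant there (a vertex), the parametrization still covers the boundary, and this is the sense in which the statement should be read.
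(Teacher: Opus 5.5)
Your proposal is correct and follows the paper's argument. Both obtain $\inner{x}{u^\perp(\theta)}=h'(\theta)$ from the vanishing derivative at $\varphi=\theta$ of the one-signed function $h(\varphi)-\inner{x}{u(\varphi)}$, then read off $x$ in the orthonormal frame $\{u(\theta),u^\perp(\theta)\}$; you apply this to an arbitrary contact point, so you also prove the uniqueness (and, via supporting lines, the surjectivity onto $\partial K$) that the paper's proof takes for granted.
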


\begin{explanation}
\begin{proof}
To derive this formula, define
\[
u(\theta) = (\cos\theta,\sin\theta), \quad
u^\perp(\theta) = (-\sin\theta,\cos\theta).
\]

Note that $u^\perp(\theta)$ is perpendicular to $u(\theta)$ and points in the direction of increasing $\theta$ (counterclockwise).

The point $\gamma(\theta)$ on $\partial K$ where the supporting line with normal $u(\theta)$ touches $K$ must lie in the affine line
\[
\{ x : \langle x,u(\theta)\rangle = h(\theta)\}.
\]

Therefore, since $\{u(\theta),u^\perp(\theta)\}$ is an orthonormal basis of $\R^2$, we can write
\[
\gamma(\theta)=\lambda(\theta) u(\theta)+\mu(\theta) u^\perp(\theta)
\]
for some functions $\lambda,\mu$.

The constraint $\inner{\gamma(\theta)}{u(\theta)}=h(\theta)$ gives
\[
\lambda(\theta)\inner{u(\theta)}{u(\theta)}+\mu(\theta)\inner{u^\perp(\theta)}{u(\theta)}=h(\theta).
\]
Since $\inner{u(\theta)}{u(\theta)}=1$ and $\inner{u^\perp(\theta)}{u(\theta)}=0$, we get
\[
\lambda(\theta)=h(\theta).
\]

It remains to identify $\mu(\theta)$. Fix $\theta$ and consider the scalar function
\[
g_\theta(\varphi)\coloneqq \inner{\gamma(\theta)}{u(\varphi)}-h(\varphi).
\]
By definition of the support function,
\[
h(\varphi)=\max_{x\in K}\inner{x}{u(\varphi)}\ge \inner{\gamma(\theta)}{u(\varphi)}
\qquad\text{for all }\varphi,
\]
so $g_\theta(\varphi)\le 0$ for all $\varphi$. At $\varphi=\theta$ we have equality, because $\gamma(\theta)$ lies on the supporting line with normal $u(\theta)$:
\[
g_\theta(\theta)=\inner{\gamma(\theta)}{u(\theta)}-h(\theta)=0.
\]
Hence $\varphi=\theta$ is a point where the differentiable function $g_\theta$ attains its maximum, so
\[
g_\theta'(\theta)=0.
\]
Differentiating gives
\[
g_\theta'(\varphi)=\inner{\gamma(\theta)}{u'(\varphi)}-h'(\varphi)
=\inner{\gamma(\theta)}{u^\perp(\varphi)}-h'(\varphi).
\]
Evaluating at $\varphi=\theta$ and using
\[
\gamma(\theta)=h(\theta)u(\theta)+\mu(\theta)u^\perp(\theta),
\]
we obtain
\[
0=g_\theta'(\theta)=\inner{\gamma(\theta)}{u^\perp(\theta)}-h'(\theta)=\mu(\theta)-h'(\theta).
\]
Therefore
\[
\mu(\theta)=h'(\theta).
\]
Substituting the formulas for $\lambda(\theta)$ and $\mu(\theta)$ yields
\[
\gamma(\theta)=h(\theta)u(\theta)+h'(\theta)u^\perp(\theta),
\]
that is,
\[
\gamma(\theta)
=
\begin{pmatrix}
h(\theta)\cos\theta-h'(\theta)\sin\theta\\
h(\theta)\sin\theta+h'(\theta)\cos\theta
\end{pmatrix}.
\]

\end{proof}
\end{explanation}

In these settings, the formula for the tangent vector can be computed directly.

\begin{proposition}
\label{definitiongammaprime}
The tangent vector to the boundary is
\(
\gamma'(\theta) = (h(\theta)+h''(\theta))\,u^\perp(\theta).
\)
\end{proposition}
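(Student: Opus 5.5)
The plan is to differentiate the explicit parametrization from Theorem~\ref{thm:contact} directly, using $h\in C^2$ so that $\gamma$ is $C^1$. Write
\[
\gamma(\theta)=h(\theta)\,u(\theta)+h'(\theta)\,u^\perp(\theta),
\]
and record the elementary derivative identities for the moving frame:
\[
u'(\theta)=(-\sin\theta,\cos\theta)=u^\perp(\theta),\qquad (u^\perp)'(\theta)=(-\cos\theta,-\sin\theta)=-u(\theta).
\]

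Next I apply the product rule term by term:
\[
\gamma'(\theta)=h'(\theta)u(\theta)+h(\theta)u^\perp(\theta)+h''(\theta)u^\perp(\theta)-h'(\theta)u(\theta).
\]
The two $u(\theta)$ contributions $\pm h'(\theta)u(\theta)$ cancel. What remains is $(h(\theta)+h''(\theta))\,u^\perp(\theta)$, which is the claimed formula.

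There is no real obstacle; the computation is two lines. The only points requiring care are the following.

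\begin{itemize}
\item The frame derivatives must have the correct signs. In particular $(u^\perp)'=-u$ is what produces the cancellation.
\item The hypothesis $h\in C^2$ is exactly what makes $h''$ exist, so that $\gamma$ may be differentiated classically.
\end{itemize}

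As a remark, I would note that convexity forces $h+h''\ge 0$, so $\gamma'$ points in the counterclockwise tangent direction $u^\perp$ with speed equal to the radius of curvature. This interpretation is what later motivates the profile $\rho=h+h''$.
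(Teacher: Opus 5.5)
Your proof is correct and is the same as the paper's. Both differentiate $\gamma=h\,u+h'\,u^\perp$ by the product rule, use $u'=u^\perp$ and $(u^\perp)'=-u$, and note that the two $\pm h'u$ terms cancel, leaving $(h+h'')\,u^\perp$.
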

\begin{explanation}
\begin{proof}
From the formula $\gamma(\theta)=h(\theta)u(\theta)+h'(\theta)u^\perp(\theta)$, we differentiate:
\begin{align*}
\gamma'(\theta)&=h'(\theta)u(\theta)+h(\theta)u'(\theta)+h''(\theta)u^\perp(\theta)+h'(\theta)(u^\perp)'(\theta)\\
&=h'(\theta)u(\theta)+h(\theta)u^\perp(\theta)+h''(\theta)u^\perp(\theta)+h'(\theta)(-u(\theta))\\
&=(h'(\theta)-h'(\theta))u(\theta)+(h(\theta)+h''(\theta))u^\perp(\theta)\\
&=(h(\theta)+h''(\theta))u^\perp(\theta).
\end{align*}
\end{proof}
\end{explanation}
We also mention the well-known convexity condition. 
\begin{explanation}
As we move along the boundary parametrized by $\theta$, the tangent vector is $\gamma'(\theta)=(h+h'')u^\perp(\theta)$.

For the boundary to be traced consistently in one direction (counterclockwise), we need $h+h''\ge 0$.
If $h+h''$ were negative, the curve would backtrack, which is incompatible with the boundary of a convex set. Again, this is a classical result in convex geometry, but we include the formal proof of this equivalence for completeness.
\end{explanation}

\begin{lemma}[\cite{Schneider2014}]
\label{lemmaConvexity}
A $2\pi$-periodic function $h \in C^2( \mathbb S^1)$ is the support function of a compact convex set  if and only if $h+h'' \geq 0$ on $\mathbb{S}^1$.

\end{lemma}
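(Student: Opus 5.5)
We prove the two implications separately. The necessity direction comes from the contact-point parametrization; the sufficiency direction builds a convex set from $h$ and checks that its support function is exactly $h$.

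\emph{Necessity.} Suppose $h\in C^2$ is the support function of a compact convex set $K$. By Theorem~\ref{thm:contact} the contact point is $\gamma(\theta)=h(\theta)u(\theta)+h'(\theta)u^\perp(\theta)$, and $\gamma(\theta)\in K$. Fix $\theta$ and consider $\varphi\mapsto \inner{\gamma(\theta)}{u(\varphi)}-h(\varphi)$. It is $\le0$ everywhere, vanishes at $\varphi=\theta$, and is $C^2$, so its second derivative at $\varphi=\theta$ is nonpositive. Since $u''=-u$, that second derivative equals $-\inner{\gamma(\theta)}{u(\theta)}-h''(\theta)=-(h+h'')(\theta)$. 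Hence $h+h''\ge0$.

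\emph{Sufficiency.} Assume $h+h''\ge0$ and set
\[
K=\bigcap_{\varphi}\{x:\inner{x}{u(\varphi)}\le h(\varphi)\}.
\]
This set is closed, convex and bounded. We will show $\gamma(\theta)\in K$ for every $\theta$. Given that, $h_K\le h$ holds trivially, and $h_K(\theta)\ge\inner{\gamma(\theta)}{u(\theta)}=h(\theta)$, so $h_K=h$ (and in particular $K\neq\emptyset$).

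The key step, and the only real obstacle, is proving that for fixed $\theta$
\[
f(\varphi):=h(\varphi)-\inner{\gamma(\theta)}{u(\varphi)}\ge0 \quad\text{for all }\varphi.
\]
The function $f$ satisfies $f(\theta)=f'(\theta)=0$, and since $\inner{\gamma(\theta)}{u(\varphi)}$ is a first harmonic, $f+f''=h+h''=:\rho\ge0$. Solving this ODE by variation of parameters gives
\[
f(\theta+t)=\int_0^t \sin(t-s)\,\rho(\theta+s)\,\dd s .
\]
For $t\in[0,\pi]$ the kernel $\sin(t-s)$ is nonnegative on $[0,t]$, so $f\ge0$ there; the same argument run backwards covers $t\in[-\pi,0]$. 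By periodicity this handles all $\varphi$.

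Note that $f$ is $2\pi$-periodic, so the $t$-range up to $\pi$ on each side suffices. Periodicity is the only place where the global (circle) structure enters, and the bound $\pi$ is exactly where $\sin$ stays nonnegative. I expect this positivity argument to be the only delicate point; the rest is bookkeeping.
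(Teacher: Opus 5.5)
Your proof is correct. The sufficiency half is the paper's argument viewed from the dual side, while the necessity half takes a genuinely different and more rigorous route.

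\textbf{Sufficiency.} The paper fixes the normal $\alpha$ and shows that $\theta\mapsto\inner{\gamma(\theta)}{u(\alpha)}$ peaks at $\theta=\alpha$, because its derivative is $\rho(\theta)\sin(\alpha-\theta)$. You instead fix the contact point and solve $f+f''=\rho$ with zero Cauchy data. Both reduce to the same identity
\[
h(\alpha)-\inner{\gamma(\theta)}{u(\alpha)}=\int_\theta^\alpha \rho(s)\sin(\alpha-s)\,\dd s\ \ge 0,\qquad |\alpha-\theta|\le\pi.
\]
The paper's monotonicity phrasing is slightly shorter, since it needs no appeal to ODE uniqueness. You close this direction more carefully by verifying $h_K=h$ explicitly, which is exactly what ``is the support function'' requires. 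The paper ends by observing that the intersection of half-planes is convex, which is automatic, and leaves the identification of the support function implicit.

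\textbf{Necessity.} The paper asserts that the positively oriented tangent to $\partial K$ must be a nonnegative multiple of $u^\perp(\theta)$. It then reads off $\rho\ge 0$ from $\gamma'=\rho\,u^\perp$; that orientation claim is stated, not proved. Your second-order condition at the maximum $\varphi=\theta$ of $\inner{\gamma(\theta)}{u(\varphi)}-h(\varphi)$ is fully rigorous. It is the natural second-order companion of the first-order argument behind Theorem~\ref{thm:contact}. It also works with any point of $K$ on the supporting line, so it does not even need the explicit contact-point formula.
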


\begin{explanation}
\begin{proof}
Set
\[
\rho(\theta)\coloneqq h(\theta)+h''(\theta).
\]
By the previous proposition,
\[
\gamma'(\theta)=\rho(\theta)u^\perp(\theta).
\]

Assume first that $K$ is convex. Then as $\theta$ increases, the outer normal to the supporting line is $u(\theta)$, so the positively oriented tangent direction to $\partial K$ is $u^\perp(\theta)$. Hence the velocity vector $\gamma'(\theta)$ must be a nonnegative multiple of $u^\perp(\theta)$ for every $\theta$. Since $\gamma'(\theta)=\rho(\theta)u^\perp(\theta)$, we conclude that $\rho(\theta)\ge 0$ for all $\theta$, namely
\[
h(\theta)+h''(\theta)\ge 0 \quad \text{for all }\theta.
\]

Conversely, assume that $\rho(\theta)\ge 0$ for all $\theta$. Fix $\alpha\in[0,2\pi)$ and consider the scalar function
\[
p_\alpha(\theta)\coloneqq \inner{\gamma(\theta)}{u(\alpha)}.
\]
Differentiating and using the formula for $\gamma'$ gives
\[
p_\alpha'(\theta)=\inner{\gamma'(\theta)}{u(\alpha)}=\rho(\theta)\inner{u^\perp(\theta)}{u(\alpha)}=\rho(\theta)\sin(\alpha-\theta).
\]
Therefore $p_\alpha'(\theta)\ge 0$ on $[\alpha-\pi,\alpha]$ and $p_\alpha'(\theta)\le 0$ on $[\alpha,\alpha+\pi]$. It follows that $p_\alpha$ attains its global maximum at $\theta=\alpha$. Since $\gamma(\alpha)$ lies on the supporting line with normal $u(\alpha)$,
\[
p_\alpha(\alpha)=\inner{\gamma(\alpha)}{u(\alpha)}=h(\alpha).
\]
Hence for every $\theta$,
\[
\inner{\gamma(\theta)}{u(\alpha)}=p_\alpha(\theta)\le p_\alpha(\alpha)=h(\alpha).
\]
Because $\alpha$ was arbitrary, the image of $\gamma$ is contained in
\[
\bigcap_{\alpha\in[0,2\pi)}\{x\in\R^2:\inner{x}{u(\alpha)}\le h(\alpha)\}=K.
\]
Moreover, equality holds at $\theta=\alpha$, so for every $\alpha$ the line
\[
\{x\in\R^2:\inner{x}{u(\alpha)}=h(\alpha)\}
\]
supports the image of $\gamma$. Thus the image of $\gamma$ is exactly the boundary of the intersection above. Since that intersection is an intersection of closed half-planes, it is convex. Therefore $K$ is convex.
\end{proof}
\end{explanation}

When $\partial K$ is smooth and encloses a strictly convex set, let
\begin{equation}\label{rho}
    \rho(\theta)\coloneqq h(\theta)+h''(\theta),
\end{equation}

which is the (pointwise) radius of curvature of $\partial K$ at the point with the outer normal direction $u(\theta)$. See \cite[Theorem 5.1.1(d)--(e)]{Martini2019} or \cite[Corollary 2.5.3, formula (2.60)]{Schneider2014}. We observe that for constant width, the curvature radius satisfies a simple ``antipodal'' constraint.
\begin{explanation}
In general, $\rho$ can be interpreted as a generalized curvature radius, which may vanish at some points (corresponding to corners) but cannot be negative.
\end{explanation}

\begin{remark}[Constant width in terms of $\rho$]
If $h(\theta)+h(\theta+\pi)=w$ for all $\theta$, then differentiating twice gives $h''(\theta)+h''(\theta+\pi)=0$. Adding yields
\(
\rho(\theta)+\rho(\theta+\pi)=w.
\)

\end{remark}

It is convenient to express the area of $K$ in terms of $h$ and $\rho$.

\begin{explanation}
    Green's theorem relates a double integral over a region to a line integral around its boundary. It's a two-dimensional version of the fundamental theorem of calculus.

\begin{theorem}[Green's Theorem]
Let $D\subset\R^2$ be a region whose boundary $\partial D$ is a simple closed curve oriented counterclockwise. Let $P(x,y)$ and $Q(x,y)$ be continuously differentiable functions on $D$. Then
\[
\iint_D\left(\frac{\partial Q}{\partial x}-\frac{\partial P}{\partial y}\right)\dd x\dd y=\oint_{\partial D}(P\dd x+Q\dd y).
\]
\end{theorem}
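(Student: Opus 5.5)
The statement is the classical Green's Theorem, so I would reduce it to the fundamental theorem of calculus. By linearity it splits into the two identities
\[
\iint_D \frac{\partial Q}{\partial x}\dd x\dd y=\oint_{\partial D}Q\dd y
\qquad\text{and}\qquad
-\iint_D \frac{\partial P}{\partial y}\dd x\dd y=\oint_{\partial D}P\dd x ,
\]
and I would prove each one separately.

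I would start with the second identity for a region that is simple in the $y$-direction, namely $D=\{(x,y): a\le x\le b,\ \varphi_1(x)\le y\le \varphi_2(x)\}$ with continuous $\varphi_1\le\varphi_2$. By Fubini and the fundamental theorem of calculus in $y$,
\[
-\iint_D \frac{\partial P}{\partial y}\dd x\dd y=\int_a^b\bigl(P(x,\varphi_1(x))-P(x,\varphi_2(x))\bigr)\dd x .
\]
I would then parametrize the counterclockwise boundary. The bottom graph is traversed with $x$ increasing and the top graph with $x$ decreasing, so these two pieces give exactly the right-hand side above. The vertical segments $x=a$ and $x=b$ contribute nothing to $\oint P\dd x$, since $\dd x=0$ along them. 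The first identity follows in the same way for regions that are simple in the $x$-direction, with the roles of $x$ and $y$ exchanged.

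For a general region, I would cut $D$ by finitely many line segments into subregions, each simple in both directions. Applying the two identities to each piece and adding, the line integrals along the interior cuts cancel, because each cut is traversed twice with opposite orientations. The integrals over the outer boundary pieces then reassemble into $\oint_{\partial D}$.

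The main obstacle is this decomposition step: a region bounded by an arbitrary simple closed curve need not admit a finite decomposition into such elementary pieces. In the setting of this paper I would sidestep the difficulty, since the only application is to $D=K$ convex with boundary parametrized by $\gamma$ from Theorem~\ref{thm:contact}. A convex body is itself simple in both the $x$- and the $y$-direction: $\varphi_2$ is concave and $\varphi_1$ is convex, and the bounding vertical or horizontal segments may be degenerate. Hence the elementary case already suffices, and no decomposition is needed. For fully general regions I would cite the standard theorem rather than reprove it.
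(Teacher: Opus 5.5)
The paper gives no proof of this statement. Green's theorem is only recalled, in an expository aside, as a classical tool, and is then applied with $P=-y/2$, $Q=x/2$ to obtain $\Area(K)=\frac12\oint_{\partial K}(x\dd y-y\dd x)$.

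Your argument is the standard Fubini-plus-fundamental-theorem proof, and it is correct for regions that are simple in both coordinate directions. Your observation that a convex body is such a region is exactly what makes the elementary case sufficient for the paper's only use. Here $\varphi_2$ is concave and $\varphi_1$ convex, and both are continuous on $[a,b]$ by convexity together with closedness of $K$.

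What the citation buys over your route is generality: it covers domains with arbitrary piecewise $C^1$ boundary, where the decomposition step you flag is genuinely nontrivial. Note also that the statement as written is not quite well posed for an arbitrary simple closed curve. A Jordan curve need not be rectifiable, and then the right-hand side is not an ordinary line integral, so some boundary regularity is implicitly assumed. What your route buys is a self-contained argument in the convex case.

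You should make two points explicit.
\begin{enumerate}
\item You evaluate $\oint P\dd x$ through graphs over the $x$-axis and $\oint Q\dd y$ through graphs over the $y$-axis. So you need the line integral to depend only on the oriented curve $\partial K$. This holds because $\partial K$ is rectifiable: the convex and concave boundary functions have bounded variation.
\item The paper's computation integrates along $\gamma(\theta)$, so you need $\gamma$ to traverse $\partial K$ exactly once counterclockwise. This follows from $\gamma(\theta)$ being the contact point with outer normal $u(\theta)$, together with $\gamma'=\rho\,u^\perp$ and $\rho\ge0$. Where $\rho=0$ on an interval, $\gamma$ sits at a corner and contributes nothing.
\end{enumerate}
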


\end{explanation}

\begin{theorem}[\cite{Bonesen1987}, Section 39]
  \label{thm:area-formula}
Assume that the support function $h$ of $K$ belongs to $C^2$. Then
the area of $K$ is given by
\[
\operatorname{Area}(K)=\frac12\int_0^{2\pi} h(\theta)\,\rho(\theta)\dd\theta=\frac{1}{2}\int_0^{2\pi}\bigl(h^2+hh''\bigr)\dd\theta=\frac{1}{2}\int_0^{2\pi}\bigl(h^2-(h')^2\bigr)\dd\theta.
\]
\end{theorem}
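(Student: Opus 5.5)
The plan is to compute the area via Green's theorem applied to the $C^2$ boundary parametrization $\gamma$ from Theorem~\ref{thm:contact}, and then convert to the other two forms by integration by parts. Since $h\in C^2$, the curve $\gamma(\theta)=h(\theta)u(\theta)+h'(\theta)u^\perp(\theta)$ is $C^1$ and $2\pi$-periodic. By Lemma~\ref{lemmaConvexity}, $\rho=h+h''\ge 0$, so $\gamma$ traverses $\partial K$ counterclockwise, possibly pausing where $\rho=0$. Green's theorem with $P=-y/2$, $Q=x/2$ gives
\[
\operatorname{Area}(K)=\frac12\oint_{\partial K}(x\,\dd y-y\,\dd x)=\frac12\int_0^{2\pi}\gamma(\theta)\times\gamma'(\theta)\,\dd\theta ,
\]
where $a\times b$ denotes the scalar cross product $a_1b_2-a_2b_1$.

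By Proposition~\ref{definitiongammaprime}, $\gamma'=\rho\,u^\perp$. Using $u\times u^\perp=1$ and $u^\perp\times u^\perp=0$, we get
\[
\gamma\times\gamma'=\rho\,(h\,u\times u^\perp+h'\,u^\perp\times u^\perp)=h\rho .
\]
This gives the first formula, $\frac12\int_0^{2\pi}h\rho\,\dd\theta$. Expanding $\rho=h+h''$ gives the second formula. For the third, integrate $\int_0^{2\pi} hh''\,\dd\theta$ by parts. Periodicity kills the boundary term $[hh']_0^{2\pi}$, leaving $-\int_0^{2\pi}(h')^2\,\dd\theta$.

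The main obstacle is justifying Green's theorem when $\gamma$ is not a regular simple parametrization. Where $\rho=0$ the map $\theta\mapsto\gamma(\theta)$ is constant, at a corner of $K$, so the curve is only a monotone reparametrization of $\partial K$. I would handle this by noting that the line integral $\oint x\,\dd y-y\,\dd x$ is invariant under monotone (possibly non-injective) $C^1$ reparametrizations: the intervals where $\gamma$ is constant contribute nothing. It therefore suffices that $\gamma$ winds once around $\partial K$, which holds because the normal $u(\theta)$ turns exactly once.

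Alternatively, one can avoid this issue by approximating $h$ with $h_\varepsilon=h+\varepsilon$, the support function of $K+\varepsilon B$. Then $\rho_\varepsilon=\rho+\varepsilon>0$, so $\gamma_\varepsilon$ is a regular simple $C^1$ curve and Green's theorem applies directly. Both sides of the formula are continuous in $\varepsilon$, so letting $\varepsilon\to0$ gives the result.
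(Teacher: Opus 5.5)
Your proof is correct and follows the same route as the paper: Green's theorem with $P=-y/2$, $Q=x/2$ gives $x\,\mathrm{d}y-y\,\mathrm{d}x=h\rho\,\mathrm{d}\theta$, and integration by parts with periodicity converts $\int_0^{2\pi} hh''\,\mathrm{d}\theta$ into $-\int_0^{2\pi}(h')^2\,\mathrm{d}\theta$. You reach the integrand faster than the paper's explicit coordinate computation of $\mathrm{d}x$ and $\mathrm{d}y$ by using $\gamma'=\rho\,u^\perp$ and the cross product, and your regularization $h+\varepsilon$ (the support function of $K+\varepsilon B$) usefully justifies applying Green's theorem when $\rho$ vanishes, a point the paper does not address.
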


\begin{explanation}
\begin{proof}

    To compute the area of our region $K$ we use Green's theorem with the standard choice
$P(x,y)=-\tfrac{y}{2},\quad Q(x,y)=\tfrac{x}{2},$
so that
\[
\frac{\partial Q}{\partial x}-\frac{\partial P}{\partial y}=1,
\]
and therefore
\[
\text{Area}(K)=\frac{1}{2}\oint_{\partial K}(x\dd y-y\dd x).
\]

So we need to compute $x\dd y-y\dd x$ in terms of $\theta$.

    Since the boundary of $K$ is parametrized by
\[
\gamma(\theta)=(x(\theta),y(\theta))=\begin{pmatrix}h(\theta)\cos\theta-h'(\theta)\sin\theta\\h(\theta)\sin\theta+h'(\theta)\cos\theta\end{pmatrix},
\]
we have
\[
x(\theta)=h(\theta)\cos\theta-h'(\theta)\sin\theta,
\]
so
\begin{align*}
\frac{\dd x}{\dd\theta}&=h'(\theta)\cos\theta+h(\theta)(-\sin\theta)-h''(\theta)\sin\theta-h'(\theta)\cos\theta\\
&=-h(\theta)\sin\theta-h''(\theta)\sin\theta\\
&=-(h(\theta)+h''(\theta))\sin\theta,
\end{align*}
and
\[
y(\theta)=h(\theta)\sin\theta+h'(\theta)\cos\theta,
\]
so
\begin{align*}
\frac{\dd y}{\dd\theta}&=h'(\theta)\sin\theta+h(\theta)\cos\theta+h''(\theta)\cos\theta-h'(\theta)\sin\theta\\
&=h(\theta)\cos\theta+h''(\theta)\cos\theta\\
&=(h(\theta)+h''(\theta))\cos\theta.
\end{align*}

Therefore,
\[
\left\{\begin{aligned}
\dd x&=-(h+h'')\sin\theta\,\dd\theta\\
 \dd y&=(h+h'')\cos\theta\,\dd\theta.
\end{aligned}\right.
\]

Thus
\begin{align*}
x\dd y-y\dd x&=\bigl[(h\cos\theta-h'\sin\theta)(h+h'')\cos\theta+(h\sin\theta+h'\cos\theta)(h+h'')\sin\theta\bigr]\dd\theta\\
&=(h+h'')\bigl[(h\cos\theta-h'\sin\theta)\cos\theta+(h\sin\theta+h'\cos\theta)\sin\theta\bigr]\dd\theta\\
&=(h+h'')\bigl[h\cos^2\theta-h'\sin\theta\cos\theta+h\sin^2\theta+h'\cos\theta\sin\theta\bigr]\dd\theta\\
&=(h+h'')\bigl[h(\cos^2\theta+\sin^2\theta)+h'(\cos\theta\sin\theta-\sin\theta\cos\theta)\bigr]\dd\theta\\
&=(h+h'')h\,\dd\theta.
\end{align*}

Finally, we integrate to get the area.

\[
\text{Area}(K)=\frac{1}{2}\oint_{\partial K}(x\dd y-y\dd x)=\frac{1}{2}\int_0^{2\pi}h(\theta)(h(\theta)+h''(\theta))\dd\theta=\frac{1}{2}\int_0^{2\pi}(h^2+hh'')\dd\theta.
\]

By definition of $\rho$, we have $h+h''=\rho$ we have that 
\[\text{Area}(K)=\frac{1}{2}\int_0^{2\pi}h(\theta)(h(\theta)+h''(\theta))\dd\theta=\frac{1}{2}\int_0^{2\pi}h(\theta)\rho(\theta)\dd\theta,\]
so it only remains to obtain the last equality in the statement of the theorem. For this, note that, using Integration by Parts, we have 
\[
\int_0^{2\pi}hh''\dd\theta=\left[hh'\right]_0^{2\pi}-\int_0^{2\pi}(h')^2\dd\theta.
\]

Since $h$ is $2\pi$-periodic, $h(2\pi)=h(0)$ and $h'(2\pi)=h'(0)$, so $[hh']_0^{2\pi}=0$.

Therefore,
\[
\int_0^{2\pi}hh''\dd\theta=-\int_0^{2\pi}(h')^2\dd\theta.
\]

Substituting back:
\[
\text{Area}(K)=\frac{1}{2}\int_0^{2\pi}(h^2+hh'')\dd\theta=\frac{1}{2}\int_0^{2\pi}(h^2-(h')^2)\dd\theta.
\]
\end{proof}
\end{explanation}

\begin{explanation}
    
\textbf{Brief Review of Fourier Series}

A $2\pi$-periodic function $f(\theta)$ can be expanded as
\[
f(\theta)=a_0+\sum_{k=1}^\infty(a_k\cos k\theta+b_k\sin k\theta),
\]
where the Fourier coefficients are given by
\[
a_0=\frac{1}{2\pi}\int_0^{2\pi}f(\theta)\dd\theta,\quad a_k=\frac{1}{\pi}\int_0^{2\pi}f(\theta)\cos k\theta\dd\theta,\quad b_k=\frac{1}{\pi}\int_0^{2\pi}f(\theta)\sin k\theta\dd\theta.
\]

\textbf{Orthogonality relations:}
\begin{align*}
\int_0^{2\pi}\cos k\theta\cos\ell\theta\dd\theta&=\begin{cases}0,&k\ne\ell,\\\pi,&k=\ell\ge 1,\\2\pi,&k=\ell=0,\end{cases}\\
\int_0^{2\pi}\sin k\theta\sin\ell\theta\dd\theta&=\begin{cases}0,&k\ne\ell,\\\pi,&k=\ell\ge 1,\end{cases}\\
\int_0^{2\pi}\cos k\theta\sin\ell\theta\dd\theta&=0\quad\text{for all }k,\ell.
\end{align*}

\textbf{Parseval's identity:}
\[
\int_0^{2\pi}f(\theta)^2\dd\theta=2\pi a_0^2+\pi\sum_{k=1}^\infty(a_k^2+b_k^2).
\]

\end{explanation}

For now, assume that $h\in C^2$ and write its Fourier series as
\[
h(\theta) = a_0 + \sum_{k=1}^\infty (a_k\cos k\theta + b_k\sin k\theta).
\]
By the standard identities \cite[Section 2.3.4]{Edwards1}, the derivatives $h'$ and $h''$ are
\begin{equation}\label{h'}
    h'(\theta)=\sum_{k=1}^\infty k(-a_k\sin k\theta+b_k\cos k\theta),
\end{equation}
\begin{equation}\label{h''}
h''(\theta)=\sum_{k=1}^\infty k^2(-a_k\cos k\theta-b_k\sin k\theta).
\end{equation}

\begin{explanation}
These identities hold, for instance, in $L^2(0,2\pi)$, which is sufficient for the Parseval computations below. 
\end{explanation}

Our goal is to see how the constant width condition and the group symmetry restrict the above coefficients. 

\begin{proposition} \label{prop:coeff}
If $h\in C^2$ and $h(\theta) + h(\theta+\pi)=w$, then:
\begin{enumerate}
\item $a_0=\frac{w}{2}$,
\item $a_k=b_k=0$ for all even $k\ge 2$.
\end{enumerate}
\end{proposition}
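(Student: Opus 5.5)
The plan is to substitute the Fourier series of $h$ into the constant-width identity and compare coefficients. Since $h$ is $2\pi$-periodic and continuous (indeed $C^2$), its Fourier series converges in $L^2(0,2\pi)$ (in fact uniformly). Therefore the Fourier coefficients of $\theta\mapsto h(\theta+\pi)$ can be read off directly. Using $\cos k(\theta+\pi)=(-1)^k\cos k\theta$ and $\sin k(\theta+\pi)=(-1)^k\sin k\theta$, we get
\[
h(\theta+\pi)=a_0+\sum_{k=1}^\infty (-1)^k\bigl(a_k\cos k\theta+b_k\sin k\theta\bigr).
\]
Adding this to the series for $h(\theta)$ gives
\[
h(\theta)+h(\theta+\pi)=2a_0+\sum_{k\ge 2,\ k \text{ even}} 2\bigl(a_k\cos k\theta+b_k\sin k\theta\bigr).
\]

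Next we use that the left-hand side equals the constant $w$. By uniqueness of Fourier coefficients of an $L^2$ function, which follows from the orthogonality relations, the constant function $w$ has coefficient $w$ in degree $0$ and zero in every other degree. Matching coefficients then yields $2a_0=w$, so $a_0=w/2$. It also gives $2a_k=2b_k=0$ for every even $k\ge2$, so $a_k=b_k=0$ for those $k$.

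To avoid any convergence subtlety, the coefficient matching can be done directly from the integral formulas rather than from series manipulation. Integrate the identity $h(\theta)+h(\theta+\pi)=w$ against $1$, $\cos k\theta$, or $\sin k\theta$ over $[0,2\pi]$. Then apply the change of variables $\theta\mapsto\theta-\pi$ together with periodicity. For even $k\geq 2$, the two terms on the left give $2\pi a_k$ (respectively $2\pi b_k$), while the right-hand side integrates to $0$. For the constant term, the left side gives $4\pi a_0$ and the right side gives $2\pi w$. This involves no genuine obstacle; the only point needing care is justifying the coefficient identification, and the integral formulation handles that cleanly.
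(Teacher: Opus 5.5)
Your proposal is correct and follows the paper's own argument: you expand $h(\theta+\pi)$ using $\cos k(\theta+\pi)=(-1)^k\cos k\theta$ and $\sin k(\theta+\pi)=(-1)^k\sin k\theta$, add the two series, and match coefficients against the constant $w$. Your extra step, integrating the identity against $1$, $\cos k\theta$, $\sin k\theta$, gives a clean justification of the coefficient matching that the paper leaves implicit in its termwise series manipulation.
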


\begin{proof}
For $k \in \mathbb{N}$, we use identities
\(
\cos(k(\theta+\pi))=
(-1)^k\cos(k\theta),
\sin(k(\theta+\pi))=
(-1)^k\sin(k\theta).
\)
Thus, we have
\begin{align*}
h(\theta)+h(\theta+\pi)&=a_0+\sum_{k=1}^\infty(a_k\cos k\theta+b_k\sin k\theta)+a_0+\sum_{k=1}^\infty(a_k\cos k(\theta+\pi)+b_k\sin k(\theta+\pi))\\
&=2a_0+\sum_{k=1}^\infty(a_k\cos k\theta+b_k\sin k\theta)+\sum_{k=1}^\infty((-1)^ka_k\cos k\theta+(-1)^kb_k\sin k\theta)\\
&=2a_0+\sum_{k=1}^\infty(1+(-1)^k)(a_k\cos k\theta+b_k\sin k\theta).
\end{align*}

For this to equal the constant $w$ for all $\theta$, we need:
\begin{itemize}
\item The constant term: $2a_0=w$, so $a_0=\frac{w}{2}$.
\item For each $k\ge 1$, the coefficient of $\cos k\theta$ and $\sin k\theta$ must be zero.
\end{itemize}
If $k$ is odd, then $1+(-1)^k=0$. The condition is automatically satisfied for any $a_k,b_k$.
If $k$ is even, then $1+(-1)^k=2$. We need $2a_k=0$ and $2b_k=0$, hence $a_k=b_k=0$.
\end{proof}

\begin{corollary}
For a constant-width body, the support function has the form
\[
h(\theta) = \frac{w}{2}
+ \sum_{\substack{k\ge1\\ k\text{ odd}}}
(a_k\cos k\theta + b_k\sin k\theta).
\]
\end{corollary}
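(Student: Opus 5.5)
The corollary follows directly from Proposition~\ref{prop:coeff}, so the plan is to substitute those coefficient constraints into the Fourier expansion of $h$. Since $h\in C^2$ is $2\pi$-periodic, its Fourier series
\[
h(\theta)=a_0+\sum_{k=1}^\infty(a_k\cos k\theta+b_k\sin k\theta)
\]
converges absolutely and uniformly to $h$. Indeed, the coefficients of $h''$ are $-k^2a_k$ and $-k^2b_k$, and these are square-summable. So by Cauchy--Schwarz, $\sum_k(|a_k|+|b_k|)<\infty$. This makes the series a genuine pointwise representation of $h$, not merely an $L^2$ identity.

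Next I will invoke Proposition~\ref{prop:coeff}. The constant width identity $h(\theta)+h(\theta+\pi)=w$ gives $a_0=w/2$ and $a_k=b_k=0$ for every even $k\ge2$. Deleting these vanishing terms leaves exactly
\[
h(\theta)=\frac{w}{2}+\sum_{\substack{k\ge1\\ k\text{ odd}}}(a_k\cos k\theta+b_k\sin k\theta).
\]
Absolute convergence ensures that dropping terms and reindexing over odd $k$ does not affect convergence or the value of the sum.

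There is no genuine obstacle here. The only point needing care is the mode of convergence, which is handled by the uniform convergence remark above. If one prefers to read the identity only in the $L^2$ sense, as the paper does for its Parseval computations, then even that remark is unnecessary.
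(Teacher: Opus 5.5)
Your proposal is correct and matches the paper, which states the corollary without separate proof as the immediate consequence of Proposition~\ref{prop:coeff}: substitute $a_0=w/2$ and $a_k=b_k=0$ for even $k$ into the Fourier expansion of $h$. Your extra remark on absolute and uniform convergence for $C^2$ support functions is valid. The paper leaves it implicit, since it works with these series in the $L^2$ sense.
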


If $\mathcal R_{2\pi/n}K=K$, we get that $h$ is $2\pi/n$-periodic,
\begin{explanation}
\[
h(\theta)=h\Bigl(\theta-\frac{2\pi}{n}\Bigr)
\qquad\text{for all }\theta,
\]
and therefore, after replacing $\theta$ by $\theta+\frac{2\pi}{n}$,
\end{explanation}
\begin{equation}\label{period}
  h\Bigl(\theta+\frac{2\pi}{n}\Bigr)=h(\theta)
\qquad\text{for all }\theta.  
\end{equation}
This periodicity condition imposes strong restrictions on the Fourier coefficients of $h$.

\begin{proposition} \label{prop:divisible}
If a $2\pi$-periodic function $h$ satisfies \eqref{period},
then in its Fourier expansion
\[
h(\theta)=a_0+\sum_{k=1}^{\infty}(a_k\cos k\theta+b_k\sin k\theta),
\]
one has $a_k=b_k=0$ whenever $n\nmid k$. 
\end{proposition}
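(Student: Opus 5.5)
The plan is to compare Fourier coefficients of $h$ and of its shift $\theta\mapsto h(\theta+2\pi/n)$. Both are $2\pi$-periodic, and by \eqref{period} they are the same function. So their Fourier coefficients agree. Since $h$ is at least continuous (and in the setting of this section $C^2$, though only integrability is needed), the coefficients are given by the integral formulas, and uniqueness of Fourier coefficients for $L^1$ functions lets us identify them.

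Write $\alpha=2\pi/n$. The first step is to expand $h(\theta+\alpha)$ using the addition formulas
\[
\cos k(\theta+\alpha)=\cos k\alpha\cos k\theta-\sin k\alpha\sin k\theta,\qquad
\sin k(\theta+\alpha)=\sin k\alpha\cos k\theta+\cos k\alpha\sin k\theta .
\]
The $k$-th coefficients of the shifted function are then
\[
\bigl(a_k\cos k\alpha+b_k\sin k\alpha,\; -a_k\sin k\alpha+b_k\cos k\alpha\bigr).
\]
Equivalently, I could compute them directly via the change of variables $\theta\mapsto\theta-\alpha$ in $\frac1\pi\int_0^{2\pi}h(\theta+\alpha)\cos k\theta\dd\theta$, using $2\pi$-periodicity of the integrand to keep the interval $[0,2\pi]$.

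Equating with $(a_k,b_k)$ shows that the vector $(a_k,b_k)$ is fixed by the rotation matrix of angle $k\alpha$ (up to orientation convention). In complex form this is cleaner: setting $c_k=a_k-ib_k$, the condition reads $c_k e^{ik\alpha}=c_k$.

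The remaining step is to note that if $n\nmid k$, then $k\alpha=2\pi k/n\notin 2\pi\mathbb Z$. Hence $e^{ik\alpha}\neq1$, which gives $c_k=0$, i.e. $a_k=b_k=0$.

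There is no real obstacle. The only points requiring care are justifying the termwise comparison, which I would handle through the integral formulas for the coefficients rather than pointwise series manipulation, and keeping track of the sign conventions in the rotation.
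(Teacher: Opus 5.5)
Your proposal is correct and is essentially the paper's argument. The paper computes the coefficients of $\theta\mapsto h(\theta+2\pi/n)$ by a change of variables in the integral formulas, finds that $(a_k,b_k)$ is fixed by the rotation through angle $2\pi k/n$, and concludes $a_k=b_k=0$ when $n\nmid k$; your complex form $c_k e^{ik\alpha}=c_k$ is just a compact rewriting of the same fixed-point condition (and no uniqueness theorem is actually needed, since the shifted function and $h$ are literally the same function).
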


\begin{proof}
Fix $k\ge1$. Since $h$ is $2\pi/n$-periodic, the $k$-th cosine coefficient satisfies
\begin{align*}
a_k
&=\frac{1}{\pi}\int_0^{2\pi} h(\theta)\cos(k\theta)\,\dd\theta=\frac{1}{\pi}\int_0^{2\pi} h\Bigl(\theta+\frac{2\pi}{n}\Bigr)\cos(k\theta)\,\dd\theta.
\end{align*}
With the change of variables $\phi=\theta+\frac{2\pi}{n}$ and using the periodicity of $h$, this becomes
\begin{align*}
a_k
&=\frac{1}{\pi}\int_0^{2\pi} h(\phi)\cos\Bigl(k\phi-\frac{2\pi k}{n}\Bigr)\,\dd\phi
=\cos\Bigl(\frac{2\pi k}{n}\Bigr)a_k+\sin\Bigl(\frac{2\pi k}{n}\Bigr)b_k.
\end{align*}
Similarly,
\[
b_k=-\sin\Bigl(\frac{2\pi k}{n}\Bigr)a_k+\cos\Bigl(\frac{2\pi k}{n}\Bigr)b_k.
\]
Hence the vector $(a_k,b_k)$ is fixed by the planar rotation through angle $\frac{2\pi k}{n}$. If $n\nmid k$, then this angle is not a multiple of $2\pi$, and the only fixed vector of a nontrivial planar rotation is $(0,0)$. Therefore $a_k=b_k=0$.
If $n\mid k$, no such vanishing is forced. Thus only the modes $k=mn$, $m \in \mathbb{Z}$, can occur.
\end{proof}

By Propositions \ref{prop:coeff} and \ref{prop:divisible}, if $K$ is a constant-width body invariant under $C_n$ or $D_n$ for some even $n$, then $K$ must be a disk as mentioned in the introduction, and if $n$ is odd, then it can only have odd modes that are multiples of $n$ in their Fourier expansion.

\begin{proposition}
    \label{prop:even-odd-modes}
If $K$ is a constant-width body invariant under $C_n$ or $D_n$ then:
\begin{itemize}
    \item If $n$ is \textbf{even}, then every admissible frequency $k=mn$ is even. But constant width forces every even Fourier mode to vanish. Hence all nonconstant modes disappear, so necessarily $h(\theta)\equiv w/2$. As a consequence, $K$ is a disk of radius $w/2$.
    \item If $n$ is \textbf{odd}, the only frequencies compatible with both conditions are the odd multiples of $n$, namely $k=n,3n,5n,\dots$.
\end{itemize}
\end{proposition}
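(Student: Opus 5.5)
The plan is to combine the two Fourier constraints already established, Proposition \ref{prop:coeff} and Proposition \ref{prop:divisible}. First, $D_n$ contains $C_n$, so in either case $K$ is invariant under the rotation $\mathcal R_{2\pi/n}$. By the rotation rule $h_{\mathcal R_\alpha K}(\theta)=h_K(\theta-\alpha)$, the support function then satisfies the periodicity relation \eqref{period}. Proposition \ref{prop:divisible} therefore says that the only nonconstant modes that can survive in the Fourier expansion of $h$ are $k=mn$ with $m\ge1$. Independently, constant width gives $h(\theta)+h(\theta+\pi)=w$, and Proposition \ref{prop:coeff} forces $a_0=w/2$ and $a_k=b_k=0$ for every even $k\ge2$.

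Next I split according to the parity of $n$. If $n$ is even, every multiple $mn$ is even. So every mode permitted by the symmetry is killed by the constant-width condition, and only $h\equiv a_0=w/2$ remains. A constant support function $w/2$ about the origin is exactly the support function of the disk of radius $w/2$ centred at the origin. Since a convex body is determined by its support function, this gives $K=B_w$. If $n$ is odd, then $mn$ has the same parity as $m$. The surviving frequencies are therefore the odd multiples $n,3n,5n,\dots$, and every other coefficient vanishes.

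The one point needing care is that Proposition \ref{prop:coeff} was stated under the hypothesis $h\in C^2$, whereas the present statement is for arbitrary constant-width bodies. I would handle this directly. A support function is Lipschitz and hence lies in $L^2$, so its Fourier coefficients are defined. Both the $\pi$-antipodal relation and the $2\pi/n$-periodicity are pointwise identities. Computing the coefficients of $h(\cdot+\pi)$ by a change of variables gives $(1+(-1)^k)a_k=0$ and $2a_0=w$ with no differentiability needed, exactly as in the proof of Proposition \ref{prop:divisible}. Uniqueness of Fourier coefficients for continuous functions then yields $h\equiv w/2$ in the even case.

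I do not expect any real obstacle. The only subtlety is making sure that the regularity assumption is not actually used, which the coefficient-by-coefficient argument above handles.
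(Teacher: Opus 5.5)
Your proposal is correct and takes the same route as the paper: combine Proposition~\ref{prop:coeff} ($a_0=w/2$ and all even modes vanish) with Proposition~\ref{prop:divisible} (only multiples of $n$ survive), then split by the parity of $n$. Your extra observation that the $C^2$ hypothesis is never used is a correct and worthwhile tightening, since the paper applies this proposition to arbitrary constant-width bodies in the even case: the coefficient identities follow from changes of variables in integrals of the Lipschitz function $h$, and uniqueness of Fourier coefficients then gives $h\equiv w/2$.
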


Having disposed of the exceptional case $D_1$ in Proposition \ref{prop:D1} and of the even case in Proposition \ref{prop:even-odd-modes}, we henceforth assume that $n\ge3$ is odd and that $K$ is $C_n$-invariant. Accordingly,
\[
h(\theta)=\frac{w}{2}+\sum_{\substack{m\ge1\\m\text{ odd}}}\bigl(a_{mn}\cos(mn\theta)+b_{mn}\sin(mn\theta)\bigr).
\]
\begin{explanation}
    so every nonconstant Fourier mode is written as an odd multiple of $n$.
\end{explanation}

\section{Proof of the odd case}
We start by expressing the area of $K$ in terms of the Fourier coefficients of $h$.

\begin{theorem}[Area in Fourier Variables]
\label{thm:F-area}
    Fix an odd integer $n\ge3$, and assume that the support function $h$ of $K$ belongs to $C^2$. For a $C_n$-invariant constant-width body with Fourier coefficients $a_{mn}, b_{mn}$, the area is given by
\[
\operatorname{Area}(K)
=
\frac{\pi w^2}{4}
-
\frac{\pi}{2}
\sum_{\substack{m\ge1\\ m\text{ odd}}}
\bigl((mn)^2-1\bigr)(a_{mn}^2+b_{mn}^2).
\]
\end{theorem}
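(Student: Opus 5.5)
We start from the last expression in Theorem~\ref{thm:area-formula}, namely
\[
\operatorname{Area}(K)=\tfrac12\int_0^{2\pi}\bigl(h^2-(h')^2\bigr)\dd\theta ,
\]
and evaluate both integrals with Parseval's identity. Since $h\in C^2$, both $h$ and $h'$ lie in $L^2(0,2\pi)$. The series \eqref{h'} is then the Fourier series of $h'$, with coefficients obtained by integrating by parts, so Parseval applies to each function separately.

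By Propositions~\ref{prop:coeff}, \ref{prop:divisible} and \ref{prop:even-odd-modes}, the only coefficients that can be nonzero are $a_0=w/2$ and the pairs $(a_{mn},b_{mn})$ with $m$ odd. Parseval then gives
\[
\int_0^{2\pi}h^2\dd\theta=2\pi\Bigl(\frac w2\Bigr)^2+\pi\sum_{m\text{ odd}}(a_{mn}^2+b_{mn}^2).
\]
The $k$-th Fourier coefficients of $h'$ are $(kb_k,-ka_k)$, so
\[
\int_0^{2\pi}(h')^2\dd\theta=\pi\sum_{m\text{ odd}}(mn)^2(a_{mn}^2+b_{mn}^2).
\]

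Subtracting the second identity from the first and multiplying by $\tfrac12$ gives
\[
\operatorname{Area}(K)=\frac{\pi w^2}{4}+\frac{\pi}{2}\sum_{m\text{ odd}}\bigl(1-(mn)^2\bigr)(a_{mn}^2+b_{mn}^2),
\]
which is the claimed formula.

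The only step that needs care is justifying the use of Parseval for $h'$, that is, confirming that the formal term-by-term derivative is the true Fourier series of $h'$. This follows because $h$ is $C^1$ and periodic: integrating by parts, the boundary terms cancel and the coefficients of $h'$ are exactly $(kb_k,-ka_k)$. No pointwise convergence of the series is needed, only $L^2$ identities. The remaining work is bookkeeping.
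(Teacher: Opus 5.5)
Your proposal is correct and matches the paper's proof: both apply Parseval to $h$ and $h'$ in the formula $\tfrac12\int_0^{2\pi}\bigl(h^2-(h')^2\bigr)\,\dd\theta$, using the coefficient restrictions from constant width and $C_n$-invariance. Your integration-by-parts justification that the coefficients of $h'$ are exactly $(kb_k,-ka_k)$ makes explicit a step the paper takes from the standard identities \eqref{h'}.
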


\begin{proof}
  To apply Theorem \ref{thm:area-formula}, let us start by computing $\int_0^{2\pi}h^2\dd\theta$.
Using Parseval's identity,
\[
\int_0^{2\pi}h^2\dd\theta=2\pi a_0^2+\pi\sum_{k=1}^\infty(a_k^2+b_k^2).
\]

By Proposition \ref{prop:even-odd-modes}, for an odd $C_n$-invariant constant-width body, only the coefficients with indices $mn$ ($m$ odd) may be nonzero,
\[
\int_0^{2\pi}h^2\dd\theta
=2\pi\left(\frac{w}{2}\right)^2+\pi\sum_{\substack{m\ge1\\m\text{ odd}}}(a_{mn}^2+b_{mn}^2)
=\frac{\pi w^2}{2}+\pi\sum_{\substack{m\ge1\\m\text{ odd}}}(a_{mn}^2+b_{mn}^2).
\]

Now we compute $\int_0^{2\pi}(h')^2\dd\theta$. 
Using \eqref{h'} and Parseval's identity again,
\[
\int_0^{2\pi}(h')^2\dd\theta=\pi\sum_{\substack{m\ge1\\m\text{ odd}}}(mn)^2(a_{mn}^2+b_{mn}^2).
\]

Thus, by Theorem \ref{thm:area-formula}, the area is
\begin{align*}
\operatorname{Area}(K)&=\frac{1}{2}\int_0^{2\pi}(h^2-(h')^2)\dd\theta\\
&=\frac{1}{2}\left[\frac{\pi w^2}{2}+\pi\sum_{\substack{m\ge1\\m\text{ odd}}}(a_{mn}^2+b_{mn}^2)-\pi\sum_{\substack{m\ge1\\m\text{ odd}}}(mn)^2(a_{mn}^2+b_{mn}^2)\right]\\
&=\frac{\pi w^2}{4}-\frac{\pi}{2}\sum_{\substack{m\ge1\\m\text{ odd}}}\bigl((mn)^2-1\bigr)(a_{mn}^2+b_{mn}^2).
\end{align*}

\end{proof}

Hence, to minimize the $\operatorname{Area}(K)$ among all constant-width bodies of width $w$, we need to maximize the deficit term,
\[
\frac{\pi}{2} \sum_{\substack{m \ge 1\\m\text{ odd}}}((mn)^2-1)(a_{mn}^2+b_{mn}^2)
\]
subject to
\begin{enumerate}
\item $h$ has the Fourier form $h(\theta)=\frac{w}{2}+\sum_{m\text{ odd}, m\ge 1} (a_{mn}\cos (mn\theta)+b_{mn}\sin (mn\theta))$,
\item $h(\theta)+h''(\theta)\ge 0$ for all $\theta$.
\end{enumerate}
Working directly with $h$ is difficult because the coefficients $a_{mn}, b_{mn}$ are coupled by the pointwise inequality constraint involving the second derivative; therefore, it is much more effective to work with the radius of curvature $\rho$.

\begin{lemma}
    Fix an odd integer $n\ge3$. For a $C_n$-invariant constant-width body with smooth support function $h$ and curvature radius $\rho$, the Fourier coefficients of $\rho$, denoted $\alpha_{mn}, \beta_{mn}$, are related to those of $h$, denoted by $a_{mn}, b_{mn}$, by 
\[
\alpha_{mn} = (1-(mn)^2)a_{mn}, \quad \beta_{mn} = (1-(mn)^2)b_{mn}.
\]
The constant term is $\alpha_0 = \frac{w}{2}$.
\end{lemma}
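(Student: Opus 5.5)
The relation follows by computing the Fourier series of $\rho=h+h''$ termwise from the expansions of $h$ and $h''$ already recorded. By the Corollary to Proposition~\ref{prop:coeff} together with Proposition~\ref{prop:even-odd-modes}, for a $C_n$-invariant constant-width body with $n\ge3$ odd,
\[
h(\theta)=\frac{w}{2}+\sum_{\substack{m\ge1\\ m\text{ odd}}}\bigl(a_{mn}\cos(mn\theta)+b_{mn}\sin(mn\theta)\bigr).
\]
By \eqref{h''}, the expansion of $h''$ has no constant term, and its coefficients of $\cos(mn\theta)$ and $\sin(mn\theta)$ are $-(mn)^2a_{mn}$ and $-(mn)^2b_{mn}$. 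All other coefficients of $h''$ vanish, since they are $k^2$ times coefficients of $h$ that are already zero.

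Adding the two expansions coefficient by coefficient gives
\[
\rho(\theta)=\frac{w}{2}+\sum_{\substack{m\ge1\\ m\text{ odd}}}\bigl((1-(mn)^2)a_{mn}\cos(mn\theta)+(1-(mn)^2)b_{mn}\sin(mn\theta)\bigr).
\]
This yields $\alpha_0=\tfrac w2$, $\alpha_{mn}=(1-(mn)^2)a_{mn}$ and $\beta_{mn}=(1-(mn)^2)b_{mn}$, with all other coefficients of $\rho$ equal to zero.

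The only point needing justification is that the termwise identities are legitimate, since Fourier coefficients are not a priori additive under pointwise series manipulation. To avoid this, I will compute the coefficients of $\rho$ directly from their integral definitions. Because $h\in C^2$ is $2\pi$-periodic, integrating by parts twice gives
\[
\frac1\pi\int_0^{2\pi}h''(\theta)\cos(k\theta)\dd\theta=-k^2a_k,
\]
with no boundary terms because of periodicity. The analogous identity holds for sines, and $\int_0^{2\pi}h''\dd\theta=0$. Linearity of the coefficient functionals applied to $\rho=h+h''$ then gives the stated relations exactly. The constant-term check can be done independently: $\frac1{2\pi}\int_0^{2\pi}\rho\dd\theta=a_0=\tfrac w2$. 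This is consistent with the constant-width relation $\rho(\theta)+\rho(\theta+\pi)=w$ from the earlier Remark, which forces the mean of $\rho$ to be $w/2$.

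I do not expect a genuine obstacle here; this integration-by-parts step is the only one that requires any care.
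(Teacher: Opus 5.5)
Your proposal is correct and is essentially the paper's argument: you substitute the expansion of $h$, which contains only odd multiples of $n$, and the termwise formula \eqref{h''} into $\rho=h+h''$ and identify coefficients. Your integration-by-parts computation of the Fourier coefficients of $h''$ gives $-k^2a_k$ and $-k^2b_k$, with boundary terms vanishing by periodicity; this simply makes explicit the justification that the paper leaves to the standard identities it cites for \eqref{h''}.
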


\begin{proof}
By \eqref{rho} and \eqref{h''},
\[
\rho(\theta) = \frac{w}{2} + \sum (1-(mn)^2)(a_{mn}\cos (mn\theta) + b_{mn} \sin (mn\theta)).
\]
Identifying coefficients yields the conclusion.
\end{proof}

Substituting $a_{mn} = \frac{\alpha_{mn}}{1-(mn)^2}$ and  $b_{mn} = \frac{\beta_{mn}}{1-(mn)^2}$ into the deficit formula our problem becomes to maximize
\begin{equation} \label{J}
    J(\rho) := \frac{\pi}{2} \sum_{\substack{m \ge 1\\m\text{ odd}}} \frac{\alpha_{mn}^2+\beta_{mn}^2}{(mn)^2-1},
\end{equation}
subject to the constraints on $\rho$:
\begin{enumerate}
\item convexity: $\rho(\theta) \ge 0$ for all $\theta$,
\item constant width: $\rho(\theta) + \rho(\theta+\pi) = w$ for all $\theta$,
\item $C_n$-invariance: $\rho(\theta + 2\pi/n) = \rho(\theta)$ for all $\theta$.
\end{enumerate}


Up to this point, $\rho$ has been interpreted under the temporary assumption $h\in C^2$. The next proposition shows that for constant-width bodies with nontrivial rotational symmetry the same object is available without any smoothness hypothesis. 


\begin{proposition}[Curvature density for constant-width bodies]
\label{prop:ac-curvature}

Let $K\subset\mathbb{R}^{2}$ be a planar convex body of constant
width $w>0$, and let $h$ be its support function. Define the periodic
distribution
\[
    \mu:=h+h''
\]
on $\mathbb{S}^{1}$ by
\[
    \langle \mu,\varphi\rangle
    :=
    \langle h+h'',\varphi\rangle
    =
    \int_{0}^{2\pi}
        h(\theta)
        \bigl(\varphi(\theta)+\varphi''(\theta)\bigr)
    \,d\theta,
    \qquad
    \varphi\in C^{\infty}(\mathbb{S}^{1}).
\]
Then $\mu$ is a nonnegative finite Borel measure on $\mathbb{S}^{1}$.
Moreover, $\mu$ is absolutely continuous with respect to the
Lebesgue measure $d\theta$. More precisely, there exists
$\rho\in L^{\infty}(\mathbb{S}^{1})$ such that
\[
    \mu=\rho(\theta)\,d\theta,
\]
and
\[
    0\leq \rho(\theta)\leq w,
    \qquad
    \rho(\theta)+\rho(\theta+\pi)=w
\]
for almost every $\theta\in\mathbb{S}^{1}$.

If, in addition, $K$ is invariant under the cyclic group $C_n$,
then
\[
    \rho\left(\theta+\frac{2\pi}{n}\right)
    =
    \rho(\theta)
\]
for almost every $\theta\in\mathbb{S}^{1}$.
\end{proposition}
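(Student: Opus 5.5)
We will establish the three assertions in turn: nonnegativity, absolute continuity with the bounds, and symmetry.

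\emph{Step 1: $\mu\ge0$.} We approximate $K$ by smooth convex bodies. Let $K_\varepsilon$ be obtained by mollifying $h$ on the circle: $h_\varepsilon=h*\eta_\varepsilon$ with a nonnegative even smooth kernel. Since $h(\theta)=\max_{x\in K}\inner{x}{u(\theta)}$ is the restriction to $\Sph^1$ of a sublinear function, $h+h''$ is already nonnegative in the distributional sense. A direct route avoids this extension: for a single point $x$, the function $\inner{x}{u(\theta)}$ satisfies $f+f''=0$. For a polygon, $h+h''$ is a finite sum of nonnegative Dirac masses, with mass equal to the edge lengths at the edge normals. General $K$ is a Hausdorff limit of polygons $P_j$, so $h_{P_j}\to h$ uniformly. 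Hence $\langle \mu_{P_j},\varphi\rangle\to\langle\mu,\varphi\rangle$ for each smooth $\varphi$, and positivity passes to the limit. A nonnegative distribution is a nonnegative Radon measure, which is finite because $\mathbb S^1$ is compact; testing with $\varphi\equiv1$ gives $\mu(\mathbb S^1)=\int_0^{2\pi}h\,d\theta$.

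\emph{Step 2: absolute continuity and the bound $\rho\le w$.} Let $\tau$ denote translation by $\pi$. Constant width gives $h+h\circ\tau=w$ as functions. Applying the distribution $(\cdot)+(\cdot)''$, which commutes with $\tau$, yields
\[
\mu+\tau_*\mu = w\,d\theta ,
\]
since the constant $w$ has $w+0''=w$. Both $\mu$ and $\tau_*\mu$ are nonnegative measures, so $0\le\mu\le w\,d\theta$ as measures. For any Borel set $E$ we then have $\mu(E)\le w|E|$. Hence $\mu\ll d\theta$, and by Radon--Nikodym $\mu=\rho\,d\theta$ with $0\le\rho\le w$ a.e. Substituting back gives $(\rho+\rho\circ\tau)\,d\theta=w\,d\theta$, so $\rho(\theta)+\rho(\theta+\pi)=w$ a.e.

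\emph{Step 3: symmetry.} By the rotation formula for support functions, $C_n$-invariance gives $h(\theta+2\pi/n)=h(\theta)$. The operator $h\mapsto h+h''$ commutes with translations, so $\mu$ is invariant under shifting by $2\pi/n$. Uniqueness of the Radon--Nikodym density then gives $\rho(\theta+2\pi/n)=\rho(\theta)$ a.e.

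The main obstacle is Step 1, showing that $h+h''\ge0$ without any smoothness. We would rely on polygonal approximation, or equivalently quote \cite{Schneider2014} for the surface area measure in the plane, where $h+h''$ is the first area measure $S_1(K,\cdot)$. Once positivity is available, the sandwich $\mu\le \mu+\tau_*\mu=w\,d\theta$ in Step 2 does the essential work, and it is a one-line argument.
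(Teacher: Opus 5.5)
Your proof is correct and follows the paper's route. You establish nonnegativity of $\mu$ as the first area measure, apply $1+\partial_\theta^2$ to $h+h(\cdot+\pi)=w$ to get $\mu+\tau_\pi\mu=w\,d\theta$, and use translation invariance with uniqueness of the Radon--Nikodym density for the $C_n$-periodicity. The differences are minor: you prove positivity yourself by polygonal approximation (Dirac masses of edge length at the edge normals) rather than citing Schneider's theorem. You also get absolute continuity and $\rho\le w$ in one step from $0\le\mu\le\mu+\tau_\pi\mu=w\,d\theta$, whereas the paper kills the singular part via uniqueness of the Lebesgue decomposition and then reads off $\rho\le w$ from $\rho(\theta)+\rho(\theta+\pi)=w$ and $\rho\ge0$.
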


\begin{proof}
First, we identify \(\mathbb S^1\) with \(\mathbb{R}/2\pi\mathbb{Z}\), and let
\(d\theta\) denote the standard Lebesgue measure on \(\mathbb S^1\). For an
arbitrary planar convex body, the periodic distribution
$
\mu
$
is the first-order area measure of \(K\); in particular, it is a
nonnegative finite Borel measure on \(\mathbb S^1\) (see
\cite[Theorem~4.2.1 
]{Schneider2014}).
For \(a\in\mathbb{R}\), let \(\tau_a\) denote translation by \(a\), so
that
\[
(\tau_a f)(\theta):=f(\theta+a)
\]
for periodic functions, with the corresponding extension to
distributions and measures. Since \(K\) has constant width \(w\),
\[
h+\tau_\pi h=w.
\]
Applying the operator \(1+\partial_\theta^2\) in the sense of
distributions, and using the fact that translations commute with
distributional differentiation, gives
\[
\mu+\tau_\pi\mu=w\,d\theta.
\]
Let
\[
\mu=\rho\,d\theta+\mu_s
\]
be the Lebesgue decomposition of \(\mu\) with respect to \(d\theta\),
where \(\rho\in L^1(\mathbb S^1)\) and \(\mu_s\) is singular with respect to $d\theta$. Translation
preserves both absolute continuity and singularity with respect to
\(d\theta\), and hence
\[
\tau_\pi\mu
=
\rho(\theta+\pi)\,d\theta+\tau_\pi\mu_s.
\]
By uniqueness of the Lebesgue decomposition, the singular part of the
identity
\[
\mu+\tau_\pi\mu=w\,d\theta
\]
satisfies
\(
\mu_s+\tau_\pi\mu_s=0.
\)
Since both \(\mu_s\) and \(\tau_\pi\mu_s\) are nonnegative measures, it
follows that
$
\mu_s=0.
$
Therefore,
$
\mu=\rho\,d\theta
$
for some nonnegative \(\rho\in L^1(\mathbb S^1)\). The absolutely continuous
part of the identity above is then
\[
\bigl(\rho(\theta)+\rho(\theta+\pi)\bigr)\,d\theta
=
w\,d\theta.
\]
By uniqueness of Radon--Nikodym derivatives,
\(
\rho(\theta)+\rho(\theta+\pi)=w
\)
for almost every $\theta$.
Since both terms on the left-hand side are nonnegative, we obtain
\[
0\leq \rho(\theta)\leq w
\qquad\text{for almost every }\theta.
\]
Consequently,
$
\rho\in L^\infty(\mathbb S^1).
$
Finally, the \(C_n\)-invariance of \(K\) implies
$
\tau_{2\pi/n}h=h.
$
Applying \(1+\partial_\theta^2\) in the sense of distributions yields
\[
\tau_{2\pi/n}\mu=\mu.
\]
Since \(\mu=\rho\,d\theta\), uniqueness of the Radon--Nikodym derivative
gives
\[
\rho\left(\theta+\frac{2\pi}{n}\right)=\rho(\theta)
\qquad\text{for almost every }\theta.
\]
\end{proof}

\begin{explanation}
By Proposition \ref{prop:ac-curvature}, $\rho$ the average value of the admissible profiles $\rho$ is exactly $w/2$, and
\[
0 \le \rho(\theta) \le w \quad \text{for almost every } \theta.
\]
\end{explanation}
Define an admissible class of profiles $\rho$,
\[
\mathcal A_w^{C_n}\coloneqq
\Bigl\{\rho\in L^\infty(0,2\pi):
0\le \rho\le w,\;
\rho(\theta)+\rho(\theta+\pi)=w\text{ a.e.},\;
\rho\Bigl(\theta+\frac{2\pi}{n}\Bigr)=\rho(\theta)\text{ a.e.}
\Bigr\}.
\]
By Proposition \ref{prop:ac-curvature}, every $C_n$-invariant planar convex body of constant width $w$ determines an element of $\mathcal A_w^{C_n}$. The next lemma shows the converse and provides the corresponding area identity in terms of $\eqref{J}$. 
\begin{explanation}
    It begins the reconstruction step, and the subsequent corollary records the resulting equivalence with the original geometric problem.
\end{explanation}

\begin{lemma}[Reconstruction from $\rho$]
\label{lem:reconstruct-rho}
Let $\alpha_{mn}$ and $\beta_{mn}$ be the Fourier coefficients of $\rho\in\mathcal A_w^{C_n}$. Then the Fourier series of $\rho$, interpreted distributionally, is
\[
\rho(\theta)=\frac w2+\sum_{\substack{m\ge1\\m\text{ odd}}}
\bigl(\alpha_{mn}\cos(mn\theta)+\beta_{mn}\sin(mn\theta)\bigr).
\]
Let
\[
h(\theta)=\frac w2+\sum_{\substack{m\ge1\\m\text{ odd}}}
\frac{\alpha_{mn}\cos(mn\theta)+\beta_{mn}\sin(mn\theta)}{1-(mn)^2}.
\]
Then the series for $h$ converges absolutely and uniformly, one has
\[
h\Bigl(\theta+\frac{2\pi}{n}\Bigr)=h(\theta),
\qquad
h(\theta+\pi)+h(\theta)=w,
\]
and $h+h''=\rho$ in the sense of distributions. Moreover, $h$ is the support function of a $C_n$-invariant planar convex body $K$ of constant width $w$, and
\begin{equation} \label{Area}
    \Area(K)=\frac{\pi w^2}{4}-J(\rho).
\end{equation}

\end{lemma}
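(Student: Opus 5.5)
First I would pin down the Fourier structure of $\rho$. Since $\rho\in L^\infty\subset L^2$, it has an $L^2$ Fourier series. Repeating the coefficient argument of Proposition \ref{prop:divisible} with $\rho$ in place of $h$ shows that the condition $\rho(\theta+2\pi/n)=\rho(\theta)$ a.e.\ kills every mode with $n\nmid k$. Repeating the argument of Proposition \ref{prop:coeff} with the a.e.\ identity $\rho+\tau_\pi\rho=w$ kills every even mode and forces $\alpha_0=w/2$. Because $n$ is odd, the surviving modes are exactly $k=mn$ with $m$ odd, which gives the stated series.

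Next I would establish the convergence and symmetry properties of $h$. Bessel's inequality gives $\sum(\alpha_{mn}^2+\beta_{mn}^2)<\infty$. Cauchy--Schwarz against $\sum((mn)^2-1)^{-2}<\infty$ then makes the series for $h$ converge absolutely and uniformly, in fact with one derivative by the same bound with weight $mn/((mn)^2-1)$. Periodicity under $2\pi/n$ and the identity $h(\theta+\pi)+h(\theta)=w$ follow term by term, since $\cos(mn(\theta+2\pi/n))=\cos(mn\theta)$ and $\cos(mn(\theta+\pi))=-\cos(mn\theta)$ for odd $m$, and likewise for the sines. The relation $h+h''=\rho$ holds distributionally: pairing with a test function $\varphi$ and using uniform convergence, the $k$-th coefficient of $h+h''$ equals $(1-k^2)\cdot\frac{\alpha_k}{1-k^2}=\alpha_k$, and a distribution on $\mathbb S^1$ is determined by its Fourier coefficients.

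The step I expect to be the main obstacle is showing that $h$ is a support function of a convex body, because Lemma \ref{lemmaConvexity} is stated only for $C^2$ functions. My first approach would be Fejér approximation, as the Remark announces. Let $\rho_N=\sigma_N\rho$ be the Fejér means. They are trigonometric polynomials, the Fejér kernel is nonnegative with integral one, and convolution commutes with translations, so $0\le\rho_N\le w$ and $\rho_N$ satisfies the same antipodal and $C_n$ relations. The corresponding $h_N$, built from $\rho_N$ by the same formula, is smooth with $h_N+h_N''=\rho_N\ge0$. By Lemma \ref{lemmaConvexity}, $h_N$ is the support function of a convex body $K_N$, which is $C_n$-invariant and of constant width $w$. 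The coefficients of $h_N$ are those of $h$ multiplied by Fejér factors in $[0,1]$, so the same Cauchy--Schwarz tail bound gives $h_N\to h$ uniformly. Uniform limits of support functions are support functions, since sublinearity of the positively homogeneous extension is preserved in the limit. Alternatively one can argue directly that $\int h(\varphi+\varphi'')\ge0$ for $\varphi\ge0$ characterizes convexity. Either way $h=h_K$ with $K=\bigcap_\theta\{\langle x,u(\theta)\rangle\le h(\theta)\}$, and $K$ has constant width $w>0$, hence nonempty interior, and is $C_n$-invariant.

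Finally, the area identity. For each smooth $h_N$, Theorem \ref{thm:area-formula} and Theorem \ref{thm:F-area} give $\Area(K_N)=\frac{\pi w^2}{4}-J(\rho_N)$. Area is continuous under Hausdorff convergence, and Hausdorff distance equals the sup-norm of the difference of support functions, so $\Area(K_N)\to\Area(K)$. The Fejér factors increase to $1$ and the weights $((mn)^2-1)^{-1}$ are summable against $\alpha^2+\beta^2$, so $J(\rho_N)\to J(\rho)$ by monotone or dominated convergence. Passing to the limit yields \eqref{Area}.
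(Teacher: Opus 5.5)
Your proposal is correct and follows essentially the same route as the paper: the Fej\'er means $\rho_N$ inherit all the constraints, the smooth $h_N$ are support functions by Lemma~\ref{lemmaConvexity}, $h_N\to h$ uniformly so $h$ is itself a support function, and the identity $\Area(K_N)=\frac{\pi w^2}{4}-J(\rho_N)$ passes to the limit by Hausdorff continuity of area and convergence of $J(\rho_N)$. The differences are cosmetic: you get absolute convergence from Bessel and Cauchy--Schwarz (which even gives $h\in C^1$) rather than from the paper's pointwise bound $|\alpha_{mn}|,|\beta_{mn}|\le w$, and you explicitly derive that $\rho$ has only the modes $k=mn$ with $m$ odd, which the paper takes for granted.
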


\begin{proof}
For every odd $m\ge1$, the corresponding Fourier coefficients can be written as
\[
\alpha_{mn}=\frac{1}{\pi}\int_0^{2\pi}\Bigl(\rho(\theta)-\frac w2\Bigr)\cos(mn\theta)\,\dd\theta,
\qquad
\beta_{mn}=\frac{1}{\pi}\int_0^{2\pi}\Bigl(\rho(\theta)-\frac w2\Bigr)\sin(mn\theta)\,\dd\theta,
\]
because the nonconstant trigonometric modes have zero mean. Since $0\le \rho\le w$, one has $\bigl|\rho-\frac w2\bigr|\le \frac w2$, and therefore
$
|\alpha_{mn}|,|\beta_{mn}|\le w.
$
Hence,
\[
\sum_{\substack{m\ge1\\m\text{ odd}}}
\frac{|\alpha_{mn}|+|\beta_{mn}|}{(mn)^2-1}
\le
2w\sum_{\substack{m\ge1\\m\text{ odd}}}
\frac{1}{(mn)^2-1}
<\infty,
\]
so the series defining $h$ converges absolutely and uniformly. The frequency restrictions immediately imply the displayed $C_n$-symmetry and constant-width identities, and termwise differentiation in the sense of distributions gives $h+h''=\rho$.

Let $\rho_N(\theta) = (F_N*\rho)(\theta) = \int_0^{2\pi}F_N(\theta-t)\rho(t)\,\dd t$ be the $N$th Fej\'er mean for Fej\'er kernel \cite[Section 5.1]{Edwards1},
$$
F_N(x) = \frac{1}{2 \pi} \frac{D_0(x)+\ldots+D_N(x)}{N+1}, \quad \text{where} \quad D_N(x) = \sum_{|k| \leq N} e^{ikx}.
$$
Because $F_N$ is nonnegative, has total mass $1$, and convolution with $F_N$ commutes with angular translations, one has for every shift $\alpha$,
\[
\rho_N(\theta+\alpha)=F_N*(\rho(\cdot+\alpha))(\theta).
\]
Therefore the identities satisfied by $\rho$ are inherited by $\rho_N$. In particular, each $\rho_N$ still satisfies
\[
0\le \rho_N\le w,
\qquad
\rho_N(\theta+\pi)+\rho_N(\theta)=w,
\qquad
\rho_N\Bigl(\theta+\frac{2\pi}{n}\Bigr)=\rho_N(\theta).
\]
Define $h_N$ from the Fourier coefficients of $\rho_N$ by the same formula as above. Then $h_N$ is $C^2$-smooth, satisfies $h_N+h_N''=\rho_N\ge0$. By the characterization proved earlier in Lemma \ref{lemmaConvexity}, $h_N$ is the support function of a convex body $K_N$. Similar computations to the previous ones show that $h_N(\theta)+h_N(\theta + \pi)=w$ and $h_N(\theta + \frac{2\pi}{n})=h_N(\theta)$, hence, $K_N$ has constant width $w$ and is $C_n$-invariant.

For each frequency $k$, the $k$th Fourier coefficient of $\rho_N$ is $\lambda_{N,k}$ times the corresponding coefficient of $\rho$, where $0\le\lambda_{N,k}\le1$ and $\lambda_{N,k}\to1$. Since the series defining $h$ is absolutely convergent, dominated convergence shows that $h_N\to h$ uniformly. A uniform limit of support functions is again a support function, so $h$ is the support function of a convex body $K$. The width and symmetry relations pass to the limit, and uniform convergence of support functions is equivalent to Hausdorff convergence of the associated convex bodies; see, for instance, \cite[Chapter 1]{Schneider2014}. Thus $K_N\to K$ in the Hausdorff metric.
For each $N$, we apply Theorem \ref{thm:F-area} and rewrite the area in terms of the Fourier coefficients of $\rho_N$ to obtain
\[
\Area(K_N)=\frac{\pi w^2}{4}-J(\rho_N).
\]
Because $|\alpha_{mn}(\rho_N)|,|\beta_{mn}(\rho_N)|\le w$ for all $m,N$ and the Fourier coefficients of $\rho_N$ converge to those of $\rho$, dominated convergence in the defining series of $J$ shows that
\[
J(\rho_N)\to J(\rho).
\]
Since area is continuous with respect to Hausdorff convergence on planar convex bodies; see \cite[Chapter 1]{Schneider2014},
\[
\Area(K_N)\to \Area(K).
\]
Passing to the limit in the previous identity gives \eqref{Area}.
\end{proof}

\begin{corollary}
\label{cor:no-relaxation}
For odd $n\ge3$, the map sending a body $K$ to the density $\rho$ determined by
\[
h+h''=\rho(\theta)\,\dd\theta
\]
gives a bijection between the $C_n$-invariant planar convex bodies of constant width $w$ and the admissible class $\mathcal A_w^{C_n}$. In particular, minimizing area over such bodies is equivalent to maximizing $J$ over $\mathcal A_w^{C_n}$.
\end{corollary}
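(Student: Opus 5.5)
The bijection will follow from two maps that are inverse to each other: the forward map of Proposition~\ref{prop:ac-curvature} and the reconstruction map of Lemma~\ref{lem:reconstruct-rho}.

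\textbf{Forward map.} Given a $C_n$-invariant body $K$ of constant width $w$, Proposition~\ref{prop:ac-curvature} yields $\rho_K\in\mathcal A_w^{C_n}$ with $h_K+h_K''=\rho_K\,\dd\theta$. Lebesgue-a.e.\ classes are identified, so $\rho_K$ is uniquely determined by uniqueness of Radon--Nikodym derivatives. Hence $\Phi:K\mapsto\rho_K$ is well defined into $\mathcal A_w^{C_n}$.

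\textbf{Surjectivity.} Given $\rho\in\mathcal A_w^{C_n}$, Lemma~\ref{lem:reconstruct-rho} produces a $C_n$-invariant body $K$ of constant width $w$ whose support function satisfies $h+h''=\rho$ distributionally. The distribution $h+h''$ equals $\rho_K\,\dd\theta$ and also $\rho\,\dd\theta$, so $\rho_K=\rho$ a.e.

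\textbf{Injectivity.} This is the step needing a small argument. Suppose $K_1,K_2$ are admissible with $\rho_{K_1}=\rho_{K_2}$. Then $g:=h_{K_1}-h_{K_2}$ is a continuous $2\pi$-periodic function with $g+g''=0$ as distributions. On Fourier coefficients this reads $(1-k^2)\hat g(k)=0$, so only the modes $|k|\le 1$ survive, and $g(\theta)=a_0+a_1\cos\theta+b_1\sin\theta$ with $a_0=0$.

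Both support functions have mean $w/2$, by the constant-width relation $h(\theta)+h(\theta+\pi)=w$ integrated over $\theta$, which gives the vanishing constant term. Both are also $2\pi/n$-periodic with $n\ge3$. By Proposition~\ref{prop:divisible} (applied to continuous functions, using the same coefficient computation), the coefficients of the first mode vanish because $n\nmid 1$. Hence $g\equiv0$ and $h_{K_1}=h_{K_2}$. Since a convex body is the intersection of its supporting half-planes, $K_1=K_2$. This is exactly where the group invariance matters: without fixing the center, translates would share the same $\rho$.

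\textbf{Equivalence of the optimization problems.} Every admissible $K$ satisfies $K=\Psi(\rho_K)$, where $\Psi$ is the reconstruction map. The area identity \eqref{Area} therefore gives $\Area(K)=\pi w^2/4-J(\rho_K)$ for every admissible $K$. Since $\Phi$ is a bijection, minimizing area over the bodies is the same as maximizing $J$ over $\mathcal A_w^{C_n}$, and minimizers correspond to maximizers.

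\textbf{Main obstacle.} The only delicate point is in the injectivity step: rigorously solving $g+g''=0$ for a merely continuous $g$. I would handle it by taking Fourier coefficients of the distribution, which turns the equation into the finite linear relations $(1-k^2)\hat g(k)=0$, so that no regularity assumption on $g$ is needed.
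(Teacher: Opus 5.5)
Your proposal is correct and follows the paper's proof. Existence of $\rho$ comes from Proposition~\ref{prop:ac-curvature}, surjectivity from the reconstruction in Lemma~\ref{lem:reconstruct-rho}, and injectivity from writing $h_1-h_2=a\cos\theta+b\sin\theta$ (the solution of $q+q''=0$), which the $2\pi/n$-periodicity with $n\ge3$ forces to vanish. One minor point: the relation $(1-k^2)\hat g(k)=0$ already gives $\hat g(0)=0$, so your separate mean-value argument for $a_0=0$ is unnecessary, though harmless.
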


\begin{proof}
Existence of $\rho$ for a given body follows from Proposition \ref{prop:ac-curvature}, and Lemma \ref{lem:reconstruct-rho} reconstructs a $C_n$-invariant planar convex body of constant width $w$ from any profile in $\mathcal A_w^{C_n}$.

To prove uniqueness, let $h_1$ and $h_2$ be two $C_n$-invariant support functions of constant width $w$ with the same profile $\rho$. Then $q\coloneqq h_1-h_2$ satisfies $q+q''=0$ in the sense of distributions, so
$
q(\theta)=a\cos\theta+b\sin\theta
$
for some $a,b\in\R$. Since $q\bigl(\theta+\frac{2\pi}{n}\bigr)=q(\theta)$ and $n\ge3$, the vector $(a,b)$ is fixed by the nontrivial rotation through angle $2\pi/n$, hence $a=b=0$, and $h_1\equiv h_2$.
Therefore each admissible body is uniquely determined by its profile, and Lemma \ref{lem:reconstruct-rho} gives \eqref{Area}.
\end{proof}

Since the mean value of $\rho$ is $w/2$, it is more convenient to work with the zero-mean fluctuation
\begin{equation*} 
    f(\theta)\coloneqq \rho(\theta)-\frac w2.
\end{equation*}
Then
\[
f(\theta+\pi)=-f(\theta),\qquad |f(\theta)|\le \frac w2,
\qquad f\Bigl(\theta+\frac{2\pi}{n}\Bigr)=f(\theta).
\]
For $f$, let us define
\begin{equation}\label{I}
\mathcal J(f) := J(\rho)=\frac{\pi}{2}\sum_{\substack{m\ge1\\m\text{ odd}}}
\frac{\alpha_{mn}^2+\beta_{mn}^2}{(mn)^2-1}, 
\end{equation}
and the admissible class of zero-mean fluctuations corresponding to $\mathcal A_w^{C_n}$ by
\[
\mathcal F_w^{C_n}
\coloneqq
\Bigl\{f\in L^\infty(0,2\pi):
|f|\le \tfrac w2,\;
f(\theta+\pi)=-f(\theta)\text{ a.e.},\;
f\Bigl(\theta+\tfrac{2\pi}{n}\Bigr)=f(\theta)\text{ a.e.}
\Bigr\}.
\]
Recall that our goal is to maximize the functional $\mathcal J(f)$
over $\mathcal F_w^{C_n}$. To analyze this maximization problem we rewrite the objective functional $\mathcal J(f)$ as an integral operator.

\begin{explanation}

\begin{theorem}[Convolution Theorem for Fourier Series]
Let $f$ and $g$ be $2\pi$-periodic functions with Fourier series
\[
f(\theta) = a_0 + \sum_{k=1}^\infty (a_k\cos k\theta + b_k\sin k\theta), \quad g(\theta) = \alpha_0 + \sum_{k=1}^\infty (\alpha_k\cos k\theta + \beta_k\sin k\theta).
\]
Their convolution is defined as $(f*g)(\theta) = \int_0^{2\pi} f(\phi)g(\theta-\phi)\,\dd\phi$. The Fourier series of the convolution is given by:
\[
(f*g)(\theta) = 2\pi a_0\alpha_0 + \pi \sum_{k=1}^\infty \bigl[ (a_k\alpha_k - b_k\beta_k)\cos k\theta + (a_k\beta_k + b_k\alpha_k)\sin k\theta \bigr].
\]
\end{theorem}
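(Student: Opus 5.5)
The plan is to expand both sides in the real trigonometric basis, multiply term by term, and match coefficients mode by mode. First I will justify the expansion. Assume $f,g\in L^2(0,2\pi)$, which is the only case the paper needs, since it applies the result to bounded profiles such as $f=\rho-\frac w2$. Then $f*g$ is continuous and $2\pi$-periodic, and its Fourier coefficients can be computed by Fubini's theorem. So it suffices to compute
\[
\frac{1}{2\pi}\int_0^{2\pi}(f*g)(\theta)\,\dd\theta,\qquad \frac1\pi\int_0^{2\pi}(f*g)(\theta)\cos k\theta\,\dd\theta,\qquad \frac1\pi\int_0^{2\pi}(f*g)(\theta)\sin k\theta\,\dd\theta .
\]
Equivalently, it suffices to insert the series and integrate termwise. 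This is legitimate because the map $(f,g)\mapsto f*g$ is bounded bilinear from $L^2\times L^2$ to the uniformly continuous functions. Hence I can first verify the formula for trigonometric polynomials and then pass to the limit using $L^2$ convergence of partial sums.

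For trigonometric polynomials the identity reduces to computing $\cos(j\phi)*\cos(\ell\cdot)$, $\cos(j\phi)*\sin(\ell\cdot)$ and the analogous sine products. For these I will use the addition formulas
\[
\cos k(\theta-\phi)=\cos k\theta\cos k\phi+\sin k\theta\sin k\phi,\qquad \sin k(\theta-\phi)=\sin k\theta\cos k\phi-\cos k\theta\sin k\phi,
\]
together with the orthogonality relations recalled above. Orthogonality kills every cross term with $j\neq\ell$, since integrating in $\phi$ pairs mode $j$ of $f$ only with mode $j$ of $g$. The constant term contributes $\int_0^{2\pi}a_0\alpha_0\,\dd\phi=2\pi a_0\alpha_0$.

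For mode $k\ge1$ I will expand $g(\theta-\phi)$ and integrate in $\phi$:
\[
\int_0^{2\pi}(a_k\cos k\phi+b_k\sin k\phi)\bigl(\alpha_k\cos k(\theta-\phi)+\beta_k\sin k(\theta-\phi)\bigr)\dd\phi .
\]
Using $\int\cos^2=\int\sin^2=\pi$ and $\int\cos\sin=0$, this equals
\[
\pi\bigl[(a_k\alpha_k-b_k\beta_k)\cos k\theta+(a_k\beta_k+b_k\alpha_k)\sin k\theta\bigr].
\]
The term $-b_k\beta_k$ comes from $\sin k\phi\cdot(-\cos k\theta\sin k\phi)$, and the two $\sin k\theta$ terms come from $a_k\cos k\phi\cdot\beta_k\sin k\theta\cos k\phi$ and $b_k\sin k\phi\cdot\alpha_k\cos k\theta\cos k\phi$. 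A cleaner way to check the signs is to pass to complex coefficients, where $\widehat{f*g}(k)=2\pi\hat f(k)\hat g(k)$ and $a_k-ib_k=2\hat f(k)$, and then take real and imaginary parts.

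The main obstacle is the sign bookkeeping in the mixed sine–cosine terms. I will settle it by the complex-exponential computation, which reduces everything to the single identity $\int_0^{2\pi}e^{-ik\phi}e^{-i\ell(\theta-\phi)}\,\dd\phi=2\pi e^{-ik\theta}\delta_{k\ell}$. The analytic passage to the limit is routine by the boundedness noted above.
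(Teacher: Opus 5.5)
The paper gives no proof of this statement. It is recalled, inside an explanatory aside, as a standard fact, and it is only ever applied with a continuous kernel ($M$ or $H_n$) and a bounded profile. So your restriction to $f,g\in L^2$ covers every use, and your argument is the standard one and is correct.

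The Fubini computation of the coefficients actually needs only $f,g\in L^1$. What the $L^2$ hypothesis adds is the bound $\|f*g\|_\infty\le\|f\|_2\|g\|_2$ applied to the partial sums. This shows that the symmetric partial sums of the right-hand side converge uniformly to $f*g$, so the displayed identity holds pointwise and not just coefficientwise.

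There is one slip in your sign-bookkeeping sentence. The $b_k\alpha_k\sin k\theta$ contribution comes from $b_k\sin k\phi\cdot\alpha_k\sin k\theta\sin k\phi$, the second summand of $\cos k(\theta-\phi)$. It does not come from $b_k\sin k\phi\cdot\alpha_k\cos k\theta\cos k\phi$, which integrates to zero and would multiply $\cos k\theta$ anyway.

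Your displayed mode-$k$ formula is nevertheless right, as the complex check confirms. Write $A_k,B_k$ for the cosine and sine coefficients of $f*g$. Then $A_k-iB_k=2\widehat{f*g}(k)=4\pi\hat f(k)\hat g(k)=\pi(a_k-ib_k)(\alpha_k-i\beta_k)$. Its real part and negated imaginary part are $\pi(a_k\alpha_k-b_k\beta_k)$ and $\pi(a_k\beta_k+b_k\alpha_k)$, as claimed.
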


\begin{theorem}[Parseval's Identity for Inner Products]
For $2\pi$-periodic functions $f$ and $g$ as defined above, their $L^2$ inner product can be computed from their Fourier coefficients:
\[
\int_0^{2\pi} f(\theta)g(\theta)\,\dd\theta = 2\pi a_0\alpha_0 + \pi \sum_{k=1}^\infty (a_k\alpha_k + b_k\beta_k).
\]
\end{theorem}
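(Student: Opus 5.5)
The statement to prove is Parseval's identity for inner products, inside the explanation block. The plan is to derive it from the orthogonality relations already recorded.

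First assume $f,g\in L^2(0,2\pi)$. Their partial Fourier sums $S_Nf$ and $S_Ng$ converge to $f$ and $g$ in $L^2$. This follows from the completeness of the trigonometric system, or equivalently from the scalar Parseval identity already quoted.

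Next, compute $\int_0^{2\pi}S_Nf\,S_Ng\,\dd\theta$ by expanding the product term by term. By the orthogonality relations, every cross term between distinct modes, and every cosine--sine pair, integrates to zero. The surviving terms are:
\begin{itemize}
\item the constant term, contributing $a_0\alpha_0\cdot 2\pi$;
\item for each $1\le k\le N$, the cosine terms, contributing $a_k\alpha_k\cdot\pi$;
\item for each $1\le k\le N$, the sine terms, contributing $b_k\beta_k\cdot\pi$.
\end{itemize}
This gives the truncated identity
\[
\int_0^{2\pi}S_Nf\,S_Ng\,\dd\theta=2\pi a_0\alpha_0+\pi\sum_{k=1}^N(a_k\alpha_k+b_k\beta_k).
\]

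Finally, pass to the limit $N\to\infty$. The left side converges to $\int fg$ because the $L^2$ inner product is continuous: by Cauchy--Schwarz,
\[
\Bigl|\int (S_Nf\,S_Ng-fg)\Bigr|\le\|S_Nf-f\|_2\,\|S_Ng\|_2+\|f\|_2\,\|S_Ng-g\|_2\to0.
\]
The right-hand series converges absolutely, since by Cauchy--Schwarz and the scalar Parseval identity
\[
\sum_k |a_k\alpha_k|+|b_k\beta_k|\le\Bigl(\sum_k a_k^2+b_k^2\Bigr)^{1/2}\Bigl(\sum_k \alpha_k^2+\beta_k^2\Bigr)^{1/2}<\infty .
\]
Equating the two limits yields the stated identity.

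An alternative route is polarization: apply the scalar Parseval identity to $f+g$ and $f-g$ and subtract, which gives the same formula after dividing by $4$.

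The only delicate point is the $L^2$ convergence of the partial sums, that is, completeness of the trigonometric system. I take this as the standard input from \cite{Edwards1}; it is already implicit in the scalar Parseval identity quoted just before.
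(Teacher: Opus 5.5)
Your proof is correct. The truncated identity follows from the recorded orthogonality relations. The passage to the limit via Cauchy--Schwarz and $L^2$ convergence of $S_Nf$, $S_Ng$ is sound; that convergence is equivalent to the scalar Parseval identity, since $\|f-S_Nf\|_2^2=\|f\|_2^2-\|S_Nf\|_2^2$. The polarization shortcut works equally well.

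The paper gives no proof of this statement; it only recalls it as standard background in its review of Fourier series, next to the orthogonality relations and the scalar Parseval identity. Your derivation from exactly those two facts is the standard argument the paper implicitly relies on. Your hypothesis $f,g\in L^2(0,2\pi)$ is the natural reading of ``as defined above'' and covers the paper's use with $f\in L^\infty$ and $M*f$.
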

\end{explanation}


\begin{proposition}[Integral representation of the objective $\mathcal J$]
\label{prop:objective-kernel}
Define the even convolution kernel
\begin{equation} \label{K}
M(\theta) = \sum_{\substack{m\ge1\\m\text{ odd}}}\frac{\cos(mn\theta)}{(mn)^2-1}.  
\end{equation}

For the Fourier expansion of $f\in \mathcal F_w^{C_n}$,
$
f(\theta)=\sum_{\substack{m\ge1\\ m\text{ odd}}}\bigl(\alpha_{mn}\cos(mn\theta)+\beta_{mn}\sin(mn\theta)\bigr),
$
 let $\Psi_f\coloneqq M*f$. Then
\[
\Psi_f(\theta)=\pi \sum_{\substack{m\ge1\\m\text{ odd}}} \frac{\alpha_{mn}\cos (mn\theta) + \beta_{mn}\sin (mn\theta)}{(mn)^2-1},
\]
and
\begin{equation}\label{II}
    \mathcal J(f) = \frac{1}{2\pi}\int_0^{2\pi} f(\theta)\,\Psi_f(\theta)\,\dd\theta.
\end{equation}

\end{proposition}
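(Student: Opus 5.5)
The proof is a direct Fourier computation: first compute the coefficients of $\Psi_f$ via the convolution theorem, then pair with $f$ via Parseval. Throughout I use the convention $(M*f)(\theta)=\int_0^{2\pi}M(\theta-\phi)f(\phi)\,\dd\phi$.

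\textbf{Step 1 (the kernel is a nice function).} Since $\sum_{m\text{ odd}}((mn)^2-1)^{-1}<\infty$, the series defining $M$ converges absolutely and uniformly. Hence $M$ is a continuous, even, $2\pi$-periodic function. Its only nonzero Fourier coefficients are the cosine coefficients $1/((mn)^2-1)$ at the frequencies $k=mn$ with $m$ odd. Also, $f\in\mathcal F_w^{C_n}\subset L^\infty\subset L^2$. By the frequency restrictions of Propositions \ref{prop:coeff} and \ref{prop:divisible}, transported to $\rho$ and hence to $f=\rho-\frac w2$, the function $f$ has zero mean and only odd multiples of $n$ appear in its Fourier series, which converges in $L^2$.

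\textbf{Step 2 (coefficients of $\Psi_f$).} Since $M\in C(\mathbb S^1)$ and $f\in L^2$, the convolution $\Psi_f$ is continuous. Its Fourier coefficients are obtained by Fubini, justified by boundedness of both factors. For each frequency $k$ one computes
\[
\int_0^{2\pi}\Psi_f(\theta)e^{-ik\theta}\,\dd\theta=\hat M(k)\,\hat f(k)\cdot(\text{normalizing factor}).
\]
In real form this is the convolution theorem: pairing $\cos(mn\cdot)/((mn)^2-1)$ with $\alpha_{mn}\cos(mn\cdot)+\beta_{mn}\sin(mn\cdot)$ produces
\[
\pi\,\frac{\alpha_{mn}\cos(mn\theta)+\beta_{mn}\sin(mn\theta)}{(mn)^2-1}.
\]
The sine term of $M$ vanishes because $M$ is even. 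Frequencies outside $\{mn: m\text{ odd}\}$ give zero because the corresponding coefficient of $M$ (or of $f$) vanishes. The stated series for $\Psi_f$ converges absolutely and uniformly, since $|\alpha_{mn}|,|\beta_{mn}|\le w$. Therefore it equals $\Psi_f$ pointwise, by uniqueness of Fourier coefficients for continuous functions.

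\textbf{Step 3 (Parseval).} Apply Parseval's identity for inner products to $f$ and $\Psi_f$, both of which lie in $L^2$ with zero constant term:
\[
\int_0^{2\pi}f\,\Psi_f\,\dd\theta=\pi\sum_{m\text{ odd}}\Bigl(\alpha_{mn}\cdot\frac{\pi\alpha_{mn}}{(mn)^2-1}+\beta_{mn}\cdot\frac{\pi\beta_{mn}}{(mn)^2-1}\Bigr)=\pi^2\sum_{m\text{ odd}}\frac{\alpha_{mn}^2+\beta_{mn}^2}{(mn)^2-1}.
\]
Dividing by $2\pi$ gives $\frac{\pi}{2}\sum_{m\text{ odd}}\frac{\alpha_{mn}^2+\beta_{mn}^2}{(mn)^2-1}$, which is $\mathcal J(f)$ by \eqref{I}. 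This yields \eqref{II}.

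\textbf{Main obstacle.} The only delicate point is the bookkeeping of normalizations, namely the factor $\pi$ in the real-form convolution theorem and the coefficient $\pi$ in Parseval for nonconstant modes. These must combine to give $\pi^2$, hence $\frac{1}{2\pi}$ in front of the integral. Convergence issues are harmless because the coefficients are bounded and the weights $((mn)^2-1)^{-1}$ are summable.
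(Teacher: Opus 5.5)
Your proposal is correct and follows the paper's own route: the real-form convolution theorem gives $\Psi_f$ the coefficients $\pi\alpha_{mn}/((mn)^2-1)$ and $\pi\beta_{mn}/((mn)^2-1)$, and Parseval for the pair $(f,\Psi_f)$ gives $\pi^2\sum_{m\text{ odd}}(\alpha_{mn}^2+\beta_{mn}^2)/((mn)^2-1)$, which explains the factor $\frac{1}{2\pi}$. Your convergence justifications are details the paper leaves implicit: uniform convergence of $M$ and of the series for $\Psi_f$, the Fubini step, and uniqueness of Fourier coefficients for continuous functions.
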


\begin{proof}
The only nonzero Fourier coefficients of $M$ are
\(
c_{mn}=\frac{1}{(mn)^2-1}.
\)
Therefore, 
\begin{explanation}
by the Convolution Theorem for Fourier series,  
\end{explanation}
\[
(M*f)(\theta) = \int_0^{2\pi} M(\theta-\phi)f(\phi)\,\dd\phi
= \pi \sum_{\substack{m\ge1\\m\text{ odd}}} \frac{\alpha_{mn}\cos (mn\theta) + \beta_{mn}\sin (mn\theta)}{(mn)^2-1}.
\]
Applying Parseval's Identity to the pair $f$ and $M*f$ gives \eqref{I},
\[
\frac{1}{2\pi}\int_0^{2\pi} f(\theta)\,(M*f)(\theta)\,\dd\theta
=
\frac{\pi}{2}\sum_{\substack{m\ge1\\m\text{ odd}}}
\frac{\alpha_{mn}^2+\beta_{mn}^2}{(mn)^2-1}
=\mathcal J(f).
\]
\end{proof}

The next result establishes the optimal profiles for $\mathcal J$ over $\mathcal F_w^{C_n}$.
\begin{proposition}[Maximizers are bang-bang]
\label{prop:bang-bang-maximizer}
The functional $\mathcal J$ attains its maximum on $\mathcal F_w^{C_n}$. Moreover, every maximizer $f_*$ satisfies
\[
|f_*(\theta)|=\frac w2 \quad\text{for almost every }\theta.
\]
\end{proposition}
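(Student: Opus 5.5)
Existence comes from compactness and continuity; the bang-bang property comes from strict convexity of $\mathcal J$.

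\emph{Existence.} The class $\mathcal F_w^{C_n}$ is convex, bounded in $L^\infty$ and hence in $L^2$. It is weak-$*$ closed in $L^\infty$, because the constraints $|f|\le w/2$, $f(\theta+\pi)=-f(\theta)$ and $f(\theta+2\pi/n)=f(\theta)$ are all preserved under weak-$*$ limits; the linear ones can be tested against translates of test functions. By Banach--Alaoglu, any maximizing sequence $f_j$ therefore has a weak-$*$ convergent subsequence $f_j\rightharpoonup f_*\in\mathcal F_w^{C_n}$. Weak-$*$ convergence gives convergence of each Fourier coefficient, and $|\alpha_{mn}|,|\beta_{mn}|\le w$ uniformly. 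Since $\sum_{m \text{ odd}}((mn)^2-1)^{-1}<\infty$, dominated convergence in the series \eqref{I} (exactly as in Lemma \ref{lem:reconstruct-rho}) gives $\mathcal J(f_j)\to\mathcal J(f_*)$. So the supremum is attained. In other words, $\mathcal J$ is a compact (weak-$*$ continuous) quadratic form.

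\emph{Bang-bang.} The point is that $\mathcal J$ is strictly convex on the linear space $V$ of $L^\infty$ functions satisfying $f(\theta+\pi)=-f(\theta)$ and $f(\theta+2\pi/n)=f(\theta)$. Every $g\in V$ has Fourier support in the modes $mn$ with $m$ odd (Proposition \ref{prop:divisible} and the argument of Proposition \ref{prop:coeff} applied to $g$). Hence, from \eqref{I},
\[
\mathcal J(g)=\frac{\pi}{2}\sum_{\substack{m\ge1\\m\text{ odd}}}\frac{\alpha_{mn}(g)^2+\beta_{mn}(g)^2}{(mn)^2-1}>0 \quad\text{whenever } g\neq0 \text{ in } V.
\]
This is because every weight is strictly positive and $g\ne0$ forces some coefficient to be nonzero.

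Now suppose a maximizer $f_*$ satisfies $|f_*|<w/2$ on a set of positive measure. Then for some $\varepsilon>0$ the set $E=\{|f_*|\le w/2-\varepsilon\}$ has positive measure. Since $|f_*|$ is invariant under the shifts by $\pi$ and $2\pi/n$ (a.e.), the set $E$ is invariant under the group generated by these shifts. I would construct a perturbation $g\in V$ supported in $E$ with $\|g\|_\infty\le\varepsilon$ and $g\neq0$. To do this, take a subset $E_0\subset E$ of positive measure that is disjoint from its translates by $k\pi/n$. Such a set exists because these shifts generate the cyclic group of rotations by multiples of $\pi/n$ (here $n$ is odd), so a small piece of a Lebesgue density point of $E$ works. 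Put $g=\varepsilon$ on $E_0$ and extend $g$ to the orbit of $E_0$ by the prescribed sign rule, namely $g(\theta+2\pi j/n)=g(\theta)$ and $g(\theta+\pi)=-g(\theta)$. This rule is consistent because $\pi\cdot n \equiv \pi \pmod{2\pi}$ while the $2\pi/n$-shifts alone never reach $\pi$. Then $f_*\pm g\in\mathcal F_w^{C_n}$, and by the parallelogram identity for the quadratic form $\mathcal J$,
\[
\tfrac12\bigl(\mathcal J(f_*+g)+\mathcal J(f_*-g)\bigr)=\mathcal J(f_*)+\mathcal J(g)>\mathcal J(f_*).
\]
So one of $f_*\pm g$ strictly beats $f_*$, a contradiction. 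Hence $|f_*|=w/2$ almost everywhere.

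The main obstacle is this perturbation step. One must check that the orbit construction respects both constraints simultaneously; the delicate point is that the sign rule is well defined, which uses $n$ odd. One must also check that $g$ is genuinely nonzero in $V$, so that the strict positivity $\mathcal J(g)>0$ applies. The existence part is routine given the coefficient bounds already used in Lemma \ref{lem:reconstruct-rho}.
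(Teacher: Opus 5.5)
Your proposal is correct and follows essentially the same route as the paper. Existence comes from weak-$*$ compactness of $\mathcal F_w^{C_n}$ and weak-$*$ continuity of $\mathcal J$. The bang-bang property comes from strict positivity of the quadratic form, combined with a symmetry-respecting perturbation on the orbit of a small piece of $\{|f_*|\le w/2-\varepsilon\}$ whose $2n$ translates by multiples of $\pi/n$ are disjoint. The only difference is organizational: the paper routes the second step through ``maximizers are extreme points'' and ``extreme points are exactly the bang-bang functions,'' while you apply the parallelogram identity $\tfrac12\bigl(\mathcal J(f_*+g)+\mathcal J(f_*-g)\bigr)=\mathcal J(f_*)+\mathcal J(g)$ directly.
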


\begin{proof}
The admissible set $\mathcal F_w^{C_n}$ is convex and weak-* compact in $L^\infty(0,2\pi)$, because it is a weak-* closed subset of the closed ball $\{f\in L^\infty:\|f\|_\infty\le w/2\}$. If $f_j\stackrel{*}{\rightharpoonup}f$ in $L^\infty$, then every Fourier coefficient of $f_j$ converges to the corresponding Fourier coefficient of $f$. Since, $\sum_{\substack{m\ge1\\m\text{ odd}}}\frac{1}{(mn)^2-1}<\infty$, and
\[
|\alpha_{mn}(f_j)|,|\beta_{mn}(f_j)|\le w
\qquad\text{for all }j,m \in \mathbb{N},
\]
the dominated convergence in the defining series shows that $\mathcal J(f_j)\to \mathcal J(f)$. Hence $\mathcal J$ is weak-* continuous, and therefore it attains its maximum on the weak-* compact set $\mathcal F_w^{C_n}$.

We next show that every maximizer is an extreme point of $\mathcal F_w^{C_n}$. The coefficients in the defining series \eqref{I} are strictly positive, so $\mathcal J$ is a strictly convex quadratic form on the symmetry subspace underlying $\mathcal F_w^{C_n}$. Concretely, if $f_1\neq f_2$ belong to $\mathcal F_w^{C_n}$, then their difference has at least one nonzero admissible Fourier coefficient, and therefore
\[
\mathcal J\Bigl(\frac{f_1+f_2}{2}\Bigr)
=
\frac{\mathcal J(f_1)+\mathcal J(f_2)}{2}
-
\mathcal J\Bigl(\frac{f_1-f_2}{2}\Bigr)
<
\frac{\mathcal J(f_1)+\mathcal J(f_2)}{2}.
\]
If a maximizer $f$ were not extreme, say $f=(f_1+f_2)/2$ with distinct $f_1,f_2\in\mathcal F_w^{C_n}$, then the previous strict inequality would contradict maximality. 
Thus, every maximizer is extreme.

It remains to identify the extreme points of $\mathcal F_w^{C_n}$. We will show that the extreme points are exactly the bang-bang functions, i.e., those satisfying $|f(\theta)|=w/2$ almost everywhere.
First, if $|f(\theta)|=w/2$ almost everywhere and
\[
f=\frac{f_1+f_2}{2}
\qquad\text{with }f_1,f_2\in\mathcal F_w^{C_n},
\]
then on the set where $f=w/2$ we must have $f_1=f_2=w/2$ almost everywhere, and similarly on the set where $f=-w/2$. Thus $f_1=f_2=f$, so $f$ is extreme.

Conversely, assume that $f\in\mathcal F_w^{C_n}$ is not bang-bang. Then there exists $\delta>0$ such that the set
\[
E_\delta\coloneqq\Bigl\{\theta:\ |f(\theta)|\le \frac w2-\delta\Bigr\}
\]
has positive measure. Because $f\bigl(\theta+\frac{2\pi}{n}\bigr)=f(\theta)$ and $f(\theta+\pi)=-f(\theta)$ hold almost everywhere, the set $E_\delta$ is invariant under the shifts $\theta\mapsto\theta+\frac{2\pi}{n}$ and $\theta\mapsto\theta+\pi$ modulo null sets. Let
\[
\Sigma\coloneqq\Bigl\{\frac{2k\pi}{n},\ \pi+\frac{2k\pi}{n}:0\le k\le n-1\Bigr\}.
\]
 Because $n$ is odd, the $2n$ points of $\Sigma$ are distinct modulo $2\pi$. Choose an interval $I$ so short that the translates of $I$ by these $2n$ shifts are pairwise disjoint modulo $2\pi$, and choose a measurable set $A\subset  E_\delta\cap I$ of positive measure. Define
\[
\eta(\theta)
=
\sum_{k=0}^{n-1}\chi_{A+2k\pi/n}(\theta)
-
\sum_{k=0}^{n-1}\chi_{A+\pi+2k\pi/n}(\theta).
\]
Then $\eta\not\equiv0$, $|\eta|\le1$, $\operatorname{supp}(\eta)\subset E_\delta$, and
\[
\eta\Bigl(\theta+\frac{2\pi}{n}\Bigr)=\eta(\theta),
\qquad
\eta(\theta+\pi)=-\eta(\theta).
\]
Therefore, for every $0<\varepsilon<\delta$ one has $f\pm\varepsilon\eta\in\mathcal F_w^{C_n}$, and these two functions are distinct because $\eta\not\equiv0$. Since
\[
f=\frac{(f+\varepsilon\eta)+(f-\varepsilon\eta)}{2},
\]
the function $f$ is not an extreme point. We conclude that the extreme points of $\mathcal F_w^{C_n}$ are exactly the bang-bang functions. Since every maximizer is extreme, every maximizer $f_*$ satisfies $|f_*|=w/2$ almost everywhere.
\end{proof}

To pass from the functional $\mathcal J(f)$ to a set-rearrangement problem on the standard circle, we rescale the angular variable by $x=n\theta$. Under this change of variables, the kernel from  \eqref{K}
becomes the $2\pi$-periodic kernel
\begin{equation}
\label{HnKernel}
H_n(t)\coloneqq \sum_{\substack{m\ge1\\m\text{ odd}}}\frac{\cos(mt)}{n^2m^2-1},
\qquad t\in\mathbb R,
\end{equation}
so that $M(\theta)=H_n(n\theta)$. The point of introducing $H_n$ is that the following rearrangement argument is carried out in the $x$-variable on the standard circle, and for that we need to know that the relevant kernel is explicit, even, and strictly decreasing on $[0,\pi]$. The next lemma proves exactly this.

\begin{lemma}[Explicit form of the rearrangement kernel]\label{lem:Hn-kernel}
For \eqref{HnKernel} and $0\le t\le \pi$, one has
\[
H_n(t)=\frac{\pi}{4n\cos\!\left(\frac{\pi}{2n}\right)}\sin\!\left(\frac{\pi-2t}{2n}\right).
\]
In particular, $H_n$ is even and strictly decreasing on $(0,\pi)$.
\end{lemma}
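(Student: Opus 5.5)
The plan is to identify $H_n$ as the unique periodic solution of a second-order ODE with a prescribed jump, and then check that the claimed closed form has exactly the same data. Set $a=1/n$, so that
\[
H_n(t)=a^2\sum_{m\text{ odd}}\frac{\cos(mt)}{m^2-a^2}.
\]
The series converges absolutely and uniformly, so $H_n$ is continuous and even.

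First we derive the equation satisfied by $H_n$. Apply $\partial_t^2+a^2$ in the sense of distributions. Each term transforms as
\[
\Bigl(\partial_t^2+a^2\Bigr)\frac{\cos(mt)}{m^2-a^2}=-\cos(mt),
\]
so
\[
H_n''+a^2H_n=-a^2\sum_{m\text{ odd}}\cos(mt).
\]
The distribution $\sum_{m\text{ odd}}\cos(mt)$ equals $\tfrac{\pi}{2}\bigl(\delta_0-\delta_\pi\bigr)$, extended $2\pi$-periodically. This follows from the Dirac comb $\sum_{k}e^{ikt}=2\pi\sum_j\delta_{2\pi j}$ by subtracting its half-shift.

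Consequently, on the open interval $(0,\pi)$ the function $H_n$ satisfies the classical equation $H''+a^2H=0$. Its general solution there is $A\cos(at)+B\sin(at)$.

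Next we extract the boundary conditions from the symmetries of $H_n$.

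1. Only odd $m$ occur, so $H_n(t+\pi)=-H_n(t)$.
2. Combined with evenness, this gives $H_n(\pi-t)=-H_n(t)$, hence $H_n(\pi/2)=0$.
3. The delta mass at $0$ produces a jump in the derivative: $H_n'(0^+)-H_n'(0^-)=-a^2\pi/2$.
4. By evenness $H_n'(0^-)=-H_n'(0^+)$, so $H_n'(0^+)=-a^2\pi/4$.

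A solution of the ODE on $(0,\pi)$ is determined by its value and derivative at a point. Here the two conditions are a value at $\pi/2$ and a derivative at $0^+$, so we take the form
\[
H_n(t)=C\sin\bigl(a(\pi/2-t)\bigr),
\]
which automatically vanishes at $t=\pi/2$. The derivative condition gives
\[
-Ca\cos\Bigl(\frac{a\pi}{2}\Bigr)=-\frac{a^2\pi}{4},
\qquad\text{so}\qquad
C=\frac{\pi}{4n\cos\bigl(\frac{\pi}{2n}\bigr)}.
\]
This is exactly the formula in the statement, since $a(\pi/2-t)=\frac{\pi-2t}{2n}$. We use the condition at $0^+$ rather than the one at $\pi$ because the symmetry $H_n(\pi-t)=-H_n(t)$ makes the two equivalent.

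If one prefers to avoid distributions, there is a cleaner alternative. Compute the Fourier cosine coefficients of the even, $\pi$-antiperiodic extension of $g(t)=\sin\bigl(a(\pi/2-|t|)\bigr)$ directly by integration, and check that they equal $\frac{a^2}{C}\cdot\frac{1}{m^2-a^2}$ for odd $m$ and vanish for even $m$. Uniqueness of Fourier coefficients for continuous functions then finishes the identification. This direct integration is the main computational step, and I expect it to be the only real obstacle; it reduces to $\int_0^{\pi}\sin\bigl(a(\pi/2-t)\bigr)\cos(mt)\,dt$, which is elementary.

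Finally, monotonicity. For $t\in(0,\pi)$ the argument $\frac{\pi-2t}{2n}$ lies in $\bigl(-\frac{\pi}{2n},\frac{\pi}{2n}\bigr)\subset\bigl(-\frac{\pi}{2},\frac{\pi}{2}\bigr)$, and it is strictly decreasing in $t$. Sine is strictly increasing on $\bigl(-\frac{\pi}{2},\frac{\pi}{2}\bigr)$, and $C>0$ because $\cos\frac{\pi}{2n}>0$. Hence $H_n$ is strictly decreasing on $(0,\pi)$.
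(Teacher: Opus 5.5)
Your proof is correct and follows essentially the same route as the paper: the distributional identity $(1+n^2\partial_t^2)H_n=-\frac{\pi}{2}(\delta_0-\delta_\pi)$, the zero $H_n(\pi/2)=0$ forced by evenness and $\pi$-antiperiodicity, and the derivative jump at $0$ (halved by evenness) to determine $C$. The direct Fourier-coefficient check you sketch as an alternative also works: the coefficients come out as $\frac{4a\cos(a\pi/2)}{\pi(m^2-a^2)}$ for odd $m$ and vanish otherwise, so it avoids distributions entirely.
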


\begin{proof}
Since
$
\sum_{\substack{m\ge1\\m\text{ odd}}}\frac{1}{n^2m^2-1}<\infty,
$
the defining Fourier series of $H_n$ converges absolutely and uniformly, so $H_n$ is continuous and even. Because only odd frequencies occur, it also satisfies
\[
H_n(t+\pi)=-H_n(t)
\qquad\text{for all }t.
\]
Note that in the sense of distributions on the circle,
$
\bigl(1+n^2\partial_t^2\bigr)H_n
=-\sum_{\substack{m\ge1\\m\text{ odd}}}\cos(mt).
$
Using the Fourier expansion of the periodic Dirac masses at $0$ and $\pi$,
$
\delta_0-\delta_\pi=\frac{2}{\pi}\sum_{\substack{m\ge1\\m\text{ odd}}}\cos(mt),
$
we obtain
\[
\bigl(1+n^2\partial_t^2\bigr)H_n=-\frac{\pi}{2}(\delta_0-\delta_\pi).
\]
Therefore, $H_n$ solves the homogeneous ODE
\[
H_n''+\frac{1}{n^2}H_n=0
\]
on the open interval $(0,\pi)$. Hence, there are constants $A,B$ such that
$
H_n(t)=A\cos\frac{t}{n}+B\sin\frac{t}{n}
$
for $0<t<\pi$.
Since $H_n$ is even and $\pi$-antiperiodic, we have
$
H_n(\pi-t)=H_n(t-\pi)=-H_n(t).
$
In particular $H_n(\pi/2)=0$. Thus, 
\(
H_n(t)=C\sin\!\left(\frac{\pi-2t}{2n}\right)
\)
for $0<t<\pi$ and some constant $C$.
To determine $C$, integrate the distributional identity across $t=0$,
\[
n^2\bigl(H_n'(0^+)-H_n'(0^-)\bigr)=-\frac{\pi}{2}.
\]
Since $H_n$ is even, $H_n'$ is odd, so $H_n'(0^-)=-H_n'(0^+)$. Hence
\(
H_n'(0^+)=-\frac{\pi}{4n^2}.
\)
On the other hand, differentiating the explicit form above gives
\(
H_n'(0^+)=-\frac{C}{n}\cos\!\left(\frac{\pi}{2n}\right).
\)
Comparing the two expressions yields
\(
C=\frac{\pi}{4n\cos\!\left(\frac{\pi}{2n}\right)}.
\)
This proves the claimed formula for $(0,\pi)$, and continuity extends it to $[0,\pi]$. Finally, for $0<t<\pi$,
\[
H_n'(t)=-\frac{\pi}{4n^2\cos\!\left(\frac{\pi}{2n}\right)}\cos\!\left(\frac{\pi-2t}{2n}\right)<0,
\]
because $\frac{\pi-2t}{2n}\in\bigl(-\frac{\pi}{2n},\frac{\pi}{2n}\bigr)$ and $n\ge3$. Thus, $H_n$ is strictly decreasing on $(0,\pi)$.
\end{proof}

We proceed to the rearrangement argument on the standard circle,
$
\mathbb T:= \mathbb R/2\pi\mathbb Z,
$
that we identify with the unit circle $\mathbb{S}^1$ via $t\mapsto (\cos t,\sin t)$ and endow with arc-length measure. For $x,y\in\mathbb T$, we write
\[
d(x,y)\coloneqq \min_{k\in\mathbb Z}|x-y+2k\pi|\in[0,\pi],
\]
for the geodesic distance. If $\Phi$ is $2\pi$-periodic, we keep the shorthand $\Phi(x-y)$ for $\Phi(\tilde x-\tilde y)$, where $\tilde x,\tilde y\in\mathbb R$ are arbitrary representatives of $x,y$.
Under identification $\mathbb T\simeq \mathbb S^1$, the arcs in $\mathbb T$ are exactly the spherical caps of $\mathbb S^1$.

\begin{lemma}[Set rearrangement on the circle]
\label{lem:circle-rearrangement}
Let $\Phi:\mathbb R\to\mathbb R$ be a nonnegative, even, $2\pi$-periodic kernel that is strictly decreasing on $(0,\pi)$. Then among measurable sets $E\subset\mathbb T$ of fixed measure, the functional
\[
E\longmapsto \int_E\int_E \Phi(x-y)\,\dd x\,\dd y
\]
is maximized exactly by arcs, that is, by connected subsets of $\mathbb T$, up to rotation.
\end{lemma}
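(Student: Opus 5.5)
The plan is the classical Riesz rearrangement inequality on the circle (Baernstein–Taylor), together with a separate equality analysis. Write the functional as $\mathcal I(E)=\int_{\mathbb T}\int_{\mathbb T}\chi_E(x)\chi_E(y)\Phi(x-y)\,\dd x\,\dd y$, and let $E^*$ be the closed arc centered at $0$ with $|E^*|=|E|$. The inequality $\mathcal I(E)\le\mathcal I(E^*)$ is the circular Riesz inequality for a symmetric decreasing kernel, which I would invoke from Baernstein--Taylor. To be self-contained I would instead reduce to indicator kernels. By the layer-cake formula, since $\Phi$ is even and strictly decreasing in $d(x,y)$,
\[
\Phi(x-y)=\Phi(\pi)+\int_{\Phi(\pi)}^{\Phi(0)}\chi_{\{d(x,y)<r(s)\}}\,\dd s ,
\]
where $r(s)\in(0,\pi)$ is determined by $\Phi(r(s))=s$. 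This gives
\[
\mathcal I(E)=\Phi(\pi)|E|^2+\int_{\Phi(\pi)}^{\Phi(0)} \mathcal I_{r(s)}(E)\,\dd s,
\qquad
\mathcal I_r(E):=\int_E |E\cap (x-r,x+r)|\,\dd x .
\]
The first term depends only on $|E|$. It therefore suffices to show, for every $r\in(0,\pi)$, that $\mathcal I_r(E)\le \mathcal I_r(E^*)$.

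For the indicator kernels I would use two-point (polarization) rearrangement on the circle. For a diameter line $\ell$ through $0$ with reflection $\sigma$, the polarization $E^\sigma$ keeps points on the half containing $0$ and swaps the $x\notin E$, $\sigma x\in E$ pairs into that half. The standard four-point check, using $d(x,y)\le d(x,\sigma y)$ for $x,y$ on the same side, shows $\mathcal I_r(E^\sigma)\ge\mathcal I_r(E)$, and also $\mathcal I(E^\sigma)\ge\mathcal I(E)$ directly. I would then take a sequence of polarizations converging in $L^1$ to $E^*$ and use the $L^1$-continuity of $\mathcal I$, which follows because $\Phi$ is bounded. This gives $\mathcal I(E)\le\mathcal I(E^*)$, and rotation invariance shows every arc attains this value.

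The main obstacle is the ``exactly'' part: showing equality forces $E$ to be an arc up to null sets. I would argue by contradiction. Suppose $E$ is not a.e. an arc. Then $E$ has density points $a,b$ and $E^c$ has density points $p,q$ arranged alternately around $\mathbb T$; this is the definition of being non-arc at the level of Lebesgue density. Choose a diameter reflection $\sigma$ that strictly separates the pattern, sending a neighborhood of $p$ onto one of $b$ while the pairs near $a$ stay fixed. The polarization then strictly increases $\mathcal I$. The reason is that in the four-point inequality, the gain term is $\bigl(\Phi(x-y)-\Phi(x-\sigma y)\bigr)$ integrated over a set of positive measure, and strict monotonicity of $\Phi$ makes this integrand strictly positive there. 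Since $\mathcal I(E)\le\mathcal I(E^\sigma)\le \mathcal I(E^*)$, equality for $E$ is impossible, so every maximizer is an arc. Nonnegativity of $\Phi$ is not really needed beyond its role in the earlier application, where $H_n$ can be shifted by a constant without changing the problem at fixed measure.

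The delicate point is making the choice of $\sigma$ explicit. Among the interlacing configurations, I would pick the reflection across the perpendicular bisector of the arc from $b$ to $p$. This ensures a positive-measure set of pairs $(x,y)$ near $(a,b)$ and $(a,p)$ where $d(x,y)<d(x,\sigma y)$ strictly.
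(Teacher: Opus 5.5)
Your inequality half is sound. Polarization plus an approximating sequence of polarizations is the Baernstein--Taylor proof of the spherical Riesz inequality, which the paper simply cites (Corollary~7.1). You implicitly assume $\Phi$ bounded; that is not in the hypotheses, but it holds for $H_n+C$. The gap is in the equality case. For a reflection $\sigma$ with open half $H$, set $A^+=\{x\in H: x\in E,\ \sigma x\notin E\}$ and $A^-=\{x\in H: x\notin E,\ \sigma x\in E\}$. The four-point computation gives exactly
\[
\mathcal I(E^\sigma)-\mathcal I(E)=2\int_{A^+}\int_{A^-}\bigl(\Phi(x-y)-\Phi(x-\sigma y)\bigr)\,\dd y\,\dd x .
\]
The factor $\Phi(x-y)-\Phi(x-\sigma y)$ is automatically positive for $x,y$ in the open half, so strict monotonicity is not the delicate point. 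What you need is one $\sigma$ with $|A^+|>0$ and $|A^-|>0$ simultaneously. Pairs with $x,\sigma x\in E$, such as those near $a$, contribute nothing.

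Your choice of $\sigma$ does not guarantee this. Take $E=(0,\pi)\cup(\pi+\tfrac12,\pi+1)$. The points $a=\pi+\tfrac34$, $p=-\tfrac{3}{10}$, $b=\tfrac{3}{10}$, $q=\pi+\tfrac14$ are alternating density points. The reflection exchanging $b$ and $p$ is $t\mapsto -t$, and since $(0,\pi)\subset E$, every mixed pair $\{t,-t\}$ has its $E$-point in the upper half. So for either choice of $H$ one of $A^\pm$ is empty, $E^\sigma\in\{E,\sigma E\}$, and $\mathcal I$ does not increase. Nothing in your proposal selects a quadruple for which the construction succeeds. The missing ingredient is a lemma that every set which is not a.e.\ an arc admits a reflection with $|A^+|,|A^-|>0$, i.e.\ a genuine characterization of arcs by polarizations.

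The paper sidesteps this by quoting the equality statement, Theorem~7.3(b) of Baernstein's book. If you want a self-contained proof, your own layer-cake formula supplies one.

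\begin{itemize}
\item Equality $\mathcal I(E)=\mathcal I(E^*)$, together with $\mathcal I_r(E)\le\mathcal I_r(E^*)$, forces $\mathcal I_{r(s)}(E)=\mathcal I_{r(s)}(E^*)$ for a.e.\ level $s$.
\item Since $\Phi$ is strictly decreasing, every interval of radii contains such an $r(s)$. Since $|\mathcal I_r(E)-\mathcal I_{r'}(E)|\le 2|E|\,|r-r'|$, equality then holds for every $r\in(0,\pi)$.
\item Now $\mathcal I_r(E)=2r|E|-\int_{-r}^{r}|E\setminus(E+h)|\,\dd h$, and $|E^*\setminus(E^*+h)|=|h|$ for small $|h|$. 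Hence $\int_{-r}^{r}|E\setminus(E+h)|\,\dd h=r^2$ for small $r$.
\item But if $a,p,b,q$ are alternating density points, then for small $h>0$, on the counterclockwise arc from $q$ to $a$,
\[
\bigl|(E\setminus(E+h))\cap(q,a)\bigr|\ge\int_q^a\bigl(\chi_E(x)-\chi_E(x-h)\bigr)\,\dd x=|E\cap(a-h,a)|-|E\cap(q-h,q)|=h-o(h).
\]
The same bound holds on the disjoint arc from $p$ to $b$.
\item Since $|E\setminus(E+h)|$ is even in $h$, this gives $\int_{-r}^{r}|E\setminus(E+h)|\,\dd h\ge 2r^2-o(r^2)$, which contradicts the value $r^2$ for small $r$.
\end{itemize}
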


\begin{proof}
Because $\Phi$ is even and $2\pi$-periodic, there exists a nonnegative function $L:[0,\pi]\to\mathbb R$ such that
\[
\Phi(x-y)=L(d(x,y))
\qquad \text{for} \, x,y\in\mathbb T,
\]
and $L$ is strictly decreasing on $(0,\pi)$. 
We proceed to 
apply \cite[Corollary 7.1]{Baernstein2019} and Fubini's theorem with
\(
f=g=\chi_E.
\)
Then the symmetric decreasing rearrangement (see \cite{Bur}) $E^{\#}$ of $E$ is the spherical cap of the same measure, hence an arc in $\mathbb T$, and
\[
\int_E\int_E \Phi(x-y)\,\dd x\,\dd y
\le
\int_{E^{\#}}\int_{E^{\#}} \Phi(x-y)\,\dd x\,\dd y.
\]
For the equality case, apply \cite[Theorem 7.3(b)]{Baernstein2019} with the same choice of non-constant $f=g=\chi_E$ and strictly supermodular $\Psi(s,t)=st$. 
Since $L$ is strictly decreasing, the equality implies that $\chi_E$ is almost everywhere in agreement with $\chi_{E^{\#}}\circ \varphi$ for some orthogonal map $\varphi$ of $\mathbb S^1$. Therefore, $E$ is an arc up to an isometry of $\mathbb S^1$, and hence up to rotation.
\end{proof}

The next lemma describes the geometry of the bang-bang curvature profiles and is closely related to Kallay’s characterization, \cite{Kallay}
\begin{lemma}[Geometry of bang-bang profiles]
\label{lem:bang-bang-geometry}
Fix an odd integer $n\ge3$ and $x_0\in\R$. Let $K$ be the convex body reconstructed in Lemma~\ref{lem:reconstruct-rho} from
\[
\rho(\theta)=\frac w2\Bigl(1+\operatorname{sgn}(\cos(n\theta-x_0))\Bigr).
\]
Then $K$ is the centered regular Reuleaux $n$-gon of width $w$.
\end{lemma}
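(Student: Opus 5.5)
Rather than integrate the support function directly, I will use the uniqueness part of Corollary~\ref{cor:no-relaxation}: a $C_n$-invariant constant-width body is determined by its profile $\rho\in\mathcal A_w^{C_n}$. It is therefore enough to show three things. First, the given $\rho$ lies in $\mathcal A_w^{C_n}$. Second, some suitably rotated centered regular Reuleaux $n$-gon $R$ is $C_n$-invariant and has constant width $w$. Third, the curvature measure $h_R+h_R''$ of $R$ equals $\rho\,\dd\theta$. Then $K=R$.

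For the first point, $\cos(n(\theta+\pi)-x_0)=-\cos(n\theta-x_0)$ because $n$ is odd. So $\rho(\theta)+\rho(\theta+\pi)=w$ wherever the sign is nonzero, that is, almost everywhere. The $2\pi/n$-periodicity is immediate, and clearly $0\le\rho\le w$.

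Next I describe $\rho$ explicitly. It equals $w$ on the $n$ arcs where $|n\theta-x_0-2\pi j|<\pi/2$, each of length $\pi/n$ and centered at $\theta_j=(x_0+2\pi j)/n$. It equals $0$ on the complementary $n$ arcs, centered at $\theta_j+\pi/n$, which are exactly the antipodes $\theta_j+\pi$ reduced modulo $2\pi/n$.

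Now I compute the curvature measure of $R=R_n(w)$ by the standard description of its boundary. The body consists of $n$ circular arcs of radius $w$ and $n$ vertices. Each arc is centered at a vertex $V$, and the arc opposite $V$ subtends at $V$ the angle of the polygon's diameter-triangle, namely $\pi/n$. The set of outer normals along that arc is therefore an interval of length $\pi/n$ on which $h+h''=w$. At each vertex, the normal cone is the antipodal interval of the same length, where the boundary has no length, so $h+h''=0$ there. These $2n$ intervals tile $\mathbb S^1$ and alternate.

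In the sense of distributions, $h_R+h_R''$ is the length measure of $\partial R$ pushed forward to normal directions. On a circular arc of radius $w$ this is $w\,\dd\theta$, and vertices contribute nothing. Hence $h_R+h_R''=w\chi_{U}\,\dd\theta$, where $U$ is the union of the arc-normal intervals.

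Finally I choose the rotation of the Reuleaux polygon so that the normal interval of one arc is centered at $\theta_0=x_0/n$. The $C_n$-symmetry of $R$ then places the remaining arc-normal intervals at the $\theta_j$, so $U$ agrees with $\{\rho=w\}$ and the two profiles coincide. The polygon is centered at the origin, so it is $C_n$-invariant with constant width $w$, and uniqueness gives $K=R$. As a consistency check, the area formula~\eqref{Area} then reproduces the known area of $R_n(w)$.

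The main obstacle is the rigorous identification of the distribution $h_R+h_R''$ for the nonsmooth Reuleaux polygon. I would handle it by writing $h_R$ piecewise: on the normal cone of vertex $V$ it is $\langle V,u(\theta)\rangle$, and on the normal interval of an arc centered at the opposite vertex $W$ it is $\langle W,u(\theta)\rangle+w$. Applying $1+\partial_\theta^2$ to each piece gives $0$ and $w$ respectively. Since $h_R$ is $C^1$ at the junctions (the contact point varies continuously, by Theorem~\ref{thm:contact} applied piecewise), no Dirac masses appear.
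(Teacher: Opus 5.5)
Your proof is correct, but it runs in the opposite direction from the paper's.

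\textbf{The paper's route.} The paper reconstructs the boundary directly from $\rho$. It first invokes local regularity to get $h$ smooth on each interval where $\rho$ is constant. It then integrates $\gamma'=\rho\,u^\perp$: the intervals where $\rho=w$ trace circular arcs of radius $w$, and the intervals where $\rho=0$ collapse to points $v_j$. Next it uses $\gamma(\theta+\pi)=\gamma(\theta)-w\,u(\theta)$ together with $I_j+\pi=J_{j+\ell}$ to identify the center of each arc as a vertex. Continuity of $\gamma$ matches arc endpoints with the adjacent vertices, and $C_n$-invariance shows that the vertices form a regular $n$-gon.

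\textbf{Your route.} You guess and verify. You take the known body $R_n(w)$ and compute $h_R+h_R''=w\chi_U\,\dd\theta$. Your piecewise formula for $h_R$ makes this rigorous: at a junction direction $\theta^*$ with $V=W+w\,u(\theta^*)$, both the values and the derivatives match, since $\langle V-W,u^\perp(\theta^*)\rangle=0$. You then rotate so that $U=\{\rho=w\}$ and invoke the injectivity part of Corollary~\ref{cor:no-relaxation}. That corollary is proved independently of this lemma, so there is no circularity.

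\textbf{Comparison.} Your argument is shorter and avoids the regularity citation. However, it presupposes the geometry of $R_n(w)$: that it is a convex body of constant width $w$ whose arc-normal intervals and vertex normal cones are antipodal intervals of length $\pi/n$. The paper's argument assumes nothing about $R_n(w)$ and derives its structure from $\rho$. Its local steps also apply to any bang-bang profile: arcs of radius $w$ centered at the contact points of the antipodal zero-intervals. That generality is the connection to Kallay's characterization.
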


\begin{proof}
Set $\ell\coloneqq \frac{n-1}{2}$, and for $j=0,\dots,n-1$ define, modulo $2\pi$,
\[
I_j=\Bigl(\frac{x_0-\pi/2+2j\pi}{n},\frac{x_0+\pi/2+2j\pi}{n}\Bigr),
\qquad
J_j=\Bigl(\frac{x_0+\pi/2+2j\pi}{n},\frac{x_0+3\pi/2+2j\pi}{n}\Bigr).
\]
Then $\rho=w$ on each $I_j$ and $\rho=0$ on each $J_j$, and these intervals alternate around the circle.

By Lemma \ref{lem:reconstruct-rho}, the support function $h$ of $K$ satisfies $h+h''=\rho$ in the sense of distributions. Since $\rho$ is constant on each open interval $I_j$ and $J_j$, and $h''=\rho-h$ in the sense of distributions on each interval, 
by \cite[Proposition 1.3.18(4)]{OW}, this implies that $h$ is smooth on each such interval and the identity holds there pointwise. On each interval define
\(
\gamma(\theta)=h(\theta)u(\theta)+h'(\theta)u^\perp(\theta).
\)
By Proposition \ref{definitiongammaprime}, $\gamma'=(h+h'')u^\perp=\rho\,u^\perp$ on each interval.

If $\theta\in I_j$, then $\rho(\theta)=w$, so
\[
(\gamma(\theta)-w u(\theta))'=\gamma'(\theta)-w u^\perp(\theta)=0.
\]
Hence, for $\theta\in I_j$,
\(
\gamma(\theta)=c_j+w u(\theta)
\)
for some constant vector $c_j$, so the image of $I_j$ is an arc of a circle of radius $w$ centered at $c_j$.
If $\theta\in J_j$, then $\rho(\theta)=0$, hence $\gamma'(\theta)=0$ and $\gamma$ is constant on $J_j$. Denote this constant point by $v_j$. Thus, each interval $J_j$ collapses to a vertex.

Next, constant width gives $h(\theta+\pi)=w-h(\theta)$ and $h'(\theta+\pi)=-h'(\theta)$. Therefore,
\[
\gamma(\theta+\pi)=\gamma(\theta)-w u(\theta).
\]
If $\theta\in I_j$, then using $n=2\ell+1$ we compute
\begin{align*}
I_j+\pi
&=
\Bigl(\frac{x_0-\pi/2+(2j+n)\pi}{n},\frac{x_0+\pi/2+(2j+n)\pi}{n}\Bigr)\\
&=
\Bigl(\frac{x_0+\pi/2+2(j+\ell)\pi}{n},\frac{x_0+3\pi/2+2(j+\ell)\pi}{n}\Bigr)
=J_{j+\ell}
\end{align*}
modulo $2\pi$. In particular, $\rho(\theta+\pi)=0$ on that interval, so $\gamma$ is constant there and therefore $\gamma(\theta+\pi)=v_{j+\ell}$. The previous identity becomes
\[
\gamma(\theta)=v_{j+\ell}+w u(\theta)
\qquad (\theta\in I_j).
\]
So the center of each circular arc is one of the vertices.

To identify the endpoints of the arc corresponding to $I_j$, note that the left endpoint of $I_j$ is the right endpoint of $J_{j-1}$, and the right endpoint of $I_j$ is the left endpoint of $J_j$. Since
\[
h''=\rho-h\in L^\infty(0,2\pi),
\]
in particular, the functions $h$ and $h'$ are continuous on the circle, hence so is
\[
\gamma(\theta)=h(\theta)u(\theta)+h'(\theta)u^\perp(\theta).
\]
Therefore, the one-sided limits of $\gamma$ along $I_j$ at its endpoints agree with the constant values on the adjacent $J$-intervals. Consequently the image of $I_j$ is the circular arc joining $v_{j-1}$ to $v_j$.

Since $I_j$ has length $\pi/n>0$, this arc is nondegenerate, so $v_{j-1}\neq v_j$. Moreover, $C_n$-invariance of $K$ implies
\[
\gamma\Bigl(\theta+\frac{2\pi}{n}\Bigr)=\mathcal R_{2\pi/n}\gamma(\theta).
\]
If $\theta\in J_j$, then $\theta+\frac{2\pi}{n}\in J_{j+1}$ modulo $2\pi$, hence
\[
v_{j+1}=\gamma\Bigl(\theta+\frac{2\pi}{n}\Bigr)=\mathcal  R_{2\pi/n}\gamma(\theta)=\mathcal R_{2\pi/n}v_j.
\]
Thus, the vertices form a single $C_n$-orbit of size $n$, namely the vertex set of a regular $n$-gon centered at the origin.

We have shown that the boundary of $K$ consists of $n$ circular arcs of radius $w$, each joining two consecutive vertices of that regular $n$-gon and centered at another vertex. This is exactly the usual construction of the centered $R_n(w)$.
\end{proof}

\subsection{Rotational  Symmetry \texorpdfstring{$C_n$}{Cn}}

\begin{theorem}[Odd cyclic symmetry]
\label{thm:cyclic-odd}
Among all planar convex bodies of constant width $w$ with $C_n$ rotational symmetry, where $n\ge3$ is odd, the minimizers are precisely the centered $R_n(w)$. Equivalently, the minimizer is unique up to rotation.
\end{theorem}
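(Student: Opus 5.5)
By Corollary~\ref{cor:no-relaxation}, minimizing area over $C_n$-invariant bodies of constant width $w$ is the same as maximizing $\mathcal J(f)$ over $\mathcal F_w^{C_n}$. By Proposition~\ref{prop:bang-bang-maximizer}, a maximizer exists and every maximizer satisfies $|f_*|=w/2$ a.e. So the plan is in three steps: determine which bang-bang profiles maximize $\mathcal J$, show they are exactly the profiles $\frac w2\operatorname{sgn}(\cos(n\theta-x_0))$, and conclude with Lemma~\ref{lem:bang-bang-geometry}.

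\textbf{Step 1: pass to the rescaled circle.} Write $f=\frac w2(2\chi_E-1)$ on the $x=n\theta$ circle. The symmetries translate as follows:
\begin{itemize}
\item $2\pi/n$-periodicity in $\theta$ becomes $2\pi$-periodicity in $x$, so $f(\theta)=g(n\theta)$ with $g$ on $\mathbb T$;
\item antiperiodicity $f(\theta+\pi)=-f(\theta)$ becomes $g(x+n\pi)=g(x+\pi)=-g(x)$, since $n$ is odd.
\end{itemize}
Hence $E\subset\mathbb T$ satisfies $E+\pi=\mathbb T\setminus E$ up to null sets, and in particular $|E|=\pi$. By Proposition~\ref{prop:objective-kernel} and $M(\theta)=H_n(n\theta)$, a change of variables gives $\mathcal J(f)=c_n\int_{\mathbb T}\int_{\mathbb T} g(x)g(y)H_n(x-y)\,\dd x\,\dd y$ with $c_n>0$.

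\textbf{Step 2: reduce to the set functional.} Expand $g=\frac w2(2\chi_E-1)$. The cross terms involve $\int H_n=0$ (no constant mode), so $\mathcal J=c_n w^2\int_E\int_E H_n(x-y)\,\dd x\,\dd y$.

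$H_n$ is not nonnegative, so I replace it by $\Phi=H_n+C$ with $C=-\min H_n=H_n(0)$. This changes the double integral only by $C|E|^2=C\pi^2$, a constant. By Lemma~\ref{lem:Hn-kernel}, $\Phi$ is nonnegative, even, $2\pi$-periodic and strictly decreasing on $(0,\pi)$. Lemma~\ref{lem:circle-rearrangement} then shows that among sets of measure $\pi$ the maximizers are exactly arcs. A half-circle arc satisfies the antipodal constraint, so the constrained maximum equals the unconstrained one. Therefore the constrained maximizers are exactly the half-arcs $E=(x_0-\pi/2,x_0+\pi/2)$ modulo null sets, which gives $g=\frac w2\operatorname{sgn}\cos(x-x_0)$ a.e. Combined with the bang-bang property, every maximizer has the form $\rho=\frac w2(1+\operatorname{sgn}\cos(n\theta-x_0))$.

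\textbf{Step 3: identify the bodies.} By Lemma~\ref{lem:bang-bang-geometry}, the body reconstructed from such a $\rho$ is the centered $R_n(w)$. By Corollary~\ref{cor:no-relaxation}, a body is determined uniquely by $\rho$, so the minimizers are exactly these bodies. Conversely, every centered $R_n(w)$ arises for some $x_0$: rotating the body shifts $\rho$, which shifts $x_0$. Hence all centered $R_n(w)$ attain the minimum, and they differ only by rotation.

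The main obstacle is the equality case in Step 2. I must make sure that the strict-decrease hypothesis in Lemma~\ref{lem:circle-rearrangement} survives the additive shift, which it does, and that "arc up to rotation" together with $|E|=\pi$ pins down exactly half-circles. I also need to check that an arbitrary measurable maximizer cannot escape this classification because of null-set modifications. This is harmless, since $\rho$ is only defined a.e.
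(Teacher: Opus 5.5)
Your proposal is correct and follows the paper's proof essentially step for step. The steps are: the bang-bang reduction; rescaling $x=n\theta$, with $g(x+\pi)=-g(x)$ coming from $n$ odd; reduction to the set functional $\int_E\int_E H_n(x-y)\,\dd x\,\dd y$ with $|E|=\pi$; a constant shift of the kernel so that Lemma~\ref{lem:circle-rearrangement} and its equality case apply; and identification of the bodies via Lemma~\ref{lem:bang-bang-geometry} and Corollary~\ref{cor:no-relaxation}. The differences are only cosmetic: you keep the factor $w/2$ in $g$, you obtain the double-integral form by a direct change of variables rather than the Fourier--Parseval computation, and you take $C=H_n(0)=-\min H_n$ instead of a strictly larger constant, which is still admissible because the lemma only requires a nonnegative kernel.
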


\begin{proof}
By Corollary \ref{cor:no-relaxation}, it suffices to classify the maximizers of $\mathcal J$ on $\mathcal F_w^{C_n}$. Let $f\in\mathcal F_w^{C_n}$ be an arbitrary maximizer. By Proposition \ref{prop:bang-bang-maximizer}, every maximizer is bang-bang, so
\[
f(\theta)=\frac w2\,\sigma(\theta),\qquad \sigma(\theta)\in\{-1,1\}\ \text{a.e.}
\]
and the symmetry relations
\(
f\Bigl(\theta+\frac{2\pi}{n}\Bigr)=f(\theta),
f(\theta+\pi)=-f(\theta)
\)
still hold.
Rescale to the variable $x=n\theta$ and define
\[
g(x)\coloneqq \frac{2}{w}\,f\Bigl(\frac{x}{n}\Bigr),\qquad x\in[0,2\pi].
\]
Then $g$ is $2\pi$-periodic, takes only the values $\pm1$ almost everywhere, and satisfies
\[
g(x+\pi)=-g(x)\qquad\text{a.e.}
\]
Indeed, for $n=2\ell+1$, by the symmetry relations,
\[
g(x+\pi)=\frac{2}{w}f\Bigl(\frac{x}{n}+\frac{\pi}{n}\Bigr)=\frac{2}{w}f\Bigl(\frac{x}{n}+\pi-\frac{2\ell\pi}{n}\Bigr)=-\frac{2}{w}f\Bigl(\frac{x}{n}\Bigr)=-g(x).
\]
Let
\(
E\coloneqq\{x\in[0,2\pi):g(x)=1\}.
\)
Since $g(x+\pi)=-g(x)$, the sets $E$ and $E+\pi$ are complementary modulo $2\pi$, hence for the Lebesgue measure
\(
|E|=\pi,
g=2\chi_E-1.
\)

Recall the rescaled kernel $H_n$ introduced in equation \eqref{HnKernel}, equivalently, it is the kernel from Proposition \ref{prop:objective-kernel} after the change of variables $x=n\theta$.
If
\[
g(x)=\sum_{\substack{m\ge1\\m\text{ odd}}}(A_m\cos(mx)+B_m\sin(mx)),
\]
then $A_m=\frac{2}{w}\alpha_{mn}$ and $B_m=\frac{2}{w}\beta_{mn}$. Therefore,
\[
\mathcal J(f)=\frac{\pi w^2}{8}\sum_{\substack{m\ge1\\m\text{ odd}}}\frac{A_m^2+B_m^2}{n^2m^2-1}.
\]
Applying the same convolution-Parseval argument as in the proof of Proposition \ref{prop:objective-kernel} gives
\[
\mathcal J(f)=\frac{w^2}{8\pi}\int_0^{2\pi}\int_0^{2\pi} g(x)H_n(x-y)g(y)\,\dd x\,\dd y.
\]
Since $H_n$ has no constant Fourier mode,
\(
\int_0^{2\pi}H_n(t)\,\dd t=0.
\)
Using $g=2\chi_E-1$, we expand
\begin{align*}
\int_0^{2\pi}\!\int_0^{2\pi} g(x)H_n(x-y)g(y)\,\dd x\,\dd y&=
4\int_E\!\int_E H_n(x-y)\,\dd x\,\dd y -2\int_E\!\int_0^{2\pi} H_n(x-y)\,\dd y\,\dd x\\
&-2\int_0^{2\pi}\!\int_E H_n(x-y)\,\dd y\,\dd x  +\int_0^{2\pi}\!\int_0^{2\pi} H_n(x-y)\,\dd y\,\dd x.
\end{align*}
For each fixed $x$, the change of variables $t=x-y$ and $2\pi$-periodicity give
\[
\int_0^{2\pi} H_n(x-y)\,\dd y=\int_0^{2\pi}H_n(t)\,\dd t=0,
\]
so the last three terms vanish. Hence
\[
\mathcal J(f)=\frac{w^2}{2\pi}\int_E\int_E H_n(x-y)\,\dd x\,\dd y.
\]
Thus maximizing $\mathcal J$ is equivalent to maximizing the set functional
\(
I(E)\coloneqq \int_E\int_E H_n(x-y)\,\dd x\,\dd y
\)
among measurable subsets $E\subset[0,2\pi)$ satisfying $E+\pi=E^c$ modulo null sets. In particular, every admissible set $E$ has measure $\pi$.
Indeed, Lebesgue measure is translation invariant, so from $E+\pi=E^c$ modulo null sets we get
\(
|E|=|E+\pi|=|E^c|=2\pi-|E|,
\)
hence $|E|=\pi$.

By Lemma \ref{lem:Hn-kernel}, $H_n$ is even and strictly decreasing on $(0,\pi)$.
Choose $C> -\min_{t\in[0,2\pi]}H_n(t)$. Since $|E|=\pi$ is fixed, maximizing $I(E)$ is equivalent to maximizing
\[
I_C(E)\coloneqq \int_E\int_E \bigl(H_n(x-y)+C\bigr)\,\dd x\,\dd y.
\]
Moreover,
\(
I_C(E)=I(E)+C|E|^2=I(E)+C\pi^2,
\)
so $I$ and $I_C$ have exactly the same maximizers in the admissible class. The choice of $C$ ensures that the kernel $H_n+C$ is nonnegative, even, and strictly decreasing on $(0,\pi)$. Therefore, Lemma \ref{lem:circle-rearrangement} applies to the enlarged class of all measurable subsets of the circle with measure $\pi$,
\[
I_C(E)\le I_C(E^*), \quad \text{for the centered arc} \, E^*, \quad |E^*|=\pi,
\]
and equality can occur only when $E$ is itself an arc up to rotation.
Note that $E^*$ is a half-circle, so it also satisfies
\(
E^*+\pi=(E^*)^c
\)
modulo null sets. Hence $E^*$ belongs to the original admissible class, and the maximal value over the enlarged class is already attained within the original one. It follows that every maximizer for the original problem must be an arc of length $\pi$. Consequently there exists $x_0\in[0,2\pi)$ such that, modulo null sets,
\[
E=\bigl(x_0-\tfrac{\pi}{2},\,x_0+\tfrac{\pi}{2}\bigr)
\quad\text{on} \, \, \mathbb{S}^1.
\]
Equivalently,
\(
g(x)=\operatorname{sgn}(\cos(x-x_0))
\)
a.e., and therefore
\[
f(\theta)=\frac w2\operatorname{sgn}(\cos(n\theta-x_0))\quad\text{a.e. for some phase shift} \, x_0.
\]
Passing back to the curvature radius gives
\[
\rho(\theta)=\frac w2\Bigl(1+\operatorname{sgn}(\cos(n\theta-x_0))\Bigr)
\quad\text{a.e.}
\]
By Lemma \ref{lem:bang-bang-geometry}, the reconstructed convex body is the centered regular Reuleaux $n$-gon of width $w$. Changing $x_0$ merely rotates the body, and every rotation arises in this way. Since $f$ was an arbitrary maximizer, every minimizer arises in this manner. Therefore the minimizers are precisely the centered regular Reuleaux $n$-gons, unique up to rotation.
\end{proof}

\subsection{Dihedral Symmetry \texorpdfstring{$D_n$}{Dn}}

All previous analysis was carried out under the assumption of $C_n$ rotational symmetry. We continue to assume that $n$ is odd, but now we impose an additional reflection symmetry. Recall that a body has $D_n$ symmetry if it has $C_n$ symmetry and is invariant under reflection across a prescribed axis. 

\begin{theorem}[Odd dihedral symmetry]
Among the $D_n$-invariant planar convex bodies of constant width $w$, where $n \geq 3$ is odd, the minimizers are exactly the two centered $R_n(w)$ invariant under the prescribed $D_n$-action. Equivalently, they are the two orientations in which the prescribed reflection axis contains a vertex.
\end{theorem}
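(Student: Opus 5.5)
The proof reduces to the cyclic case via a sandwich argument. Since $C_n\subset D_n$, every $D_n$-invariant body of constant width $w$ is in particular $C_n$-invariant. By Theorem \ref{thm:cyclic-odd} its area is therefore at least $\Area(R_n(w))$, with equality only if it is a centered regular Reuleaux $n$-gon. Conversely, any centered $R_n(w)$ that happens to be $D_n$-invariant is admissible and attains this bound. So the minimizers over the $D_n$-class are exactly the centered regular Reuleaux $n$-gons that are invariant under the reflection $S$ across the $x$-axis. This holds provided the admissible class contains at least one such polygon, which the explicit orientations identified below will show. What remains is to determine which rotations of the centered Reuleaux $n$-gon are $S$-invariant.

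To classify the orientations, use the parametrization from Lemma \ref{lem:bang-bang-geometry}: the centered Reuleaux $n$-gons correspond to the profiles $\rho(\theta)=\frac w2\bigl(1+\operatorname{sgn}\cos(n\theta-x_0)\bigr)$ with $x_0\in[0,2\pi)$. Reflection across the $x$-axis acts on support functions by $h\mapsto h(-\theta)$, hence on profiles by $\rho\mapsto\rho(-\theta)$. By the uniqueness part of Corollary \ref{cor:no-relaxation}, $S$-invariance of $K$ is therefore equivalent to $\rho(-\theta)=\rho(\theta)$ a.e. Since $\rho(-\theta)$ is the profile with phase $-x_0$, the condition becomes $\operatorname{sgn}\cos(n\theta+x_0)=\operatorname{sgn}\cos(n\theta-x_0)$ a.e. The set where $\cos(x-x_0)>0$ is the arc centered at $x_0$, so the condition holds iff $x_0\equiv -x_0 \pmod{2\pi}$, that is, $x_0\in\{0,\pi\}$.

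These two phases give genuinely different bodies. Changing $x_0$ by $\pi$ replaces $\rho$ by $w-\rho$, which corresponds to the rotation by $\pi/n$, equivalently to the point reflection $-K$. Because $n$ is odd, this is not a symmetry of $R_n(w)$. The bodies for $x_0=0$ and $x_0=\pi$ are therefore distinct, and exactly two centered Reuleaux polygons are $D_n$-invariant.

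For the geometric description, when $x_0=0$ the interval $J$ collapsed to a vertex in Lemma \ref{lem:bang-bang-geometry} contains the normal direction $\theta=\pi$, symmetric about the axis. That vertex is fixed by $S$ and so lies on the $x$-axis. When $x_0=\pi$ the same happens at $\theta=0$. In both cases the reflection axis passes through a vertex and the midpoint of the opposite arc, so the axis is a symmetry axis of the Reuleaux polygon. The main obstacle is the bookkeeping that $S$ acts as $\theta\mapsto-\theta$ on the profile; everything else follows directly from the cyclic theorem.
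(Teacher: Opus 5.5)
Your proof is correct and follows the paper's argument. It reduces to the cyclic classification via the inclusion of the $D_n$-class in the $C_n$-class, then uses the uniqueness in Corollary~\ref{cor:no-relaxation} to turn $S$-invariance into evenness of $\rho_{x_0}$, which forces $x_0\in\{0,\pi\}$. Your extra checks fill in details the paper only asserts: that $\rho_{x_0+\pi}=w-\rho_{x_0}\neq\rho_{x_0}$ makes the two bodies distinct, and that the vertex whose normal interval is centered at $\theta=\pi$ (resp.\ $\theta=0$) lies on the axis.
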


\begin{proof}
Let $\mathcal K_w^{D_n}\subseteq\mathcal K_w^{C_n}$ denote the
corresponding symmetry classes. By Theorem \ref{thm:cyclic-odd}, every minimizer in
$\mathcal K_w^{C_n}$ has curvature density
\[
\rho_{x_0}(\theta)
=
\frac{w}{2}
\left(1+\operatorname{sgn}\bigl(\cos(n\theta-x_0)\bigr)\right)
\quad\text{a.e.}
\]
for some $x_0\in\mathbb R/2\pi\mathbb Z$, and the corresponding body
is a centered regular Reuleaux $n$-gon.

Reflection across the $x$-axis corresponds to $\theta\mapsto-\theta$.
By the uniqueness in Corollary \ref{cor:no-relaxation}, the corresponding body is
reflection-invariant if and only if
\[
\rho_{x_0}(-\theta)=\rho_{x_0}(\theta)
\quad\text{a.e.}
\]
The set on which $\rho_{x_0}=w$ is the half-circle
\[
E_{x_0}
=
\left(x_0-\frac{\pi}{2},x_0+\frac{\pi}{2}\right)
\quad\text{modulo }2\pi.
\]
Thus $\rho_{x_0}$ is even if and only if
$E_{x_0}=-E_{x_0}$ modulo null sets, which holds precisely when
\[
x_0\equiv0
\quad\text{or}\quad
x_0\equiv\pi
\pmod{2\pi}.
\]

Both of these profiles are reflection-invariant, so the minimum in the
$C_n$-invariant class is attained in the $D_n$-invariant class. Since
$\mathcal K_w^{D_n}\subseteq\mathcal K_w^{C_n}$, the two minimum values
are equal. Conversely, every $D_n$-invariant minimizer must therefore
also be a $C_n$-invariant minimizer, so the preceding argument shows
that no other phases are possible.

By Lemma \ref{lem:bang-bang-geometry}, the two resulting bodies are centered regular Reuleaux
$n$-gons. Moreover,
\[
\rho_{\pi}(\theta)
=
\rho_0\left(\theta-\frac{\pi}{n}\right),
\]
so they differ by a rotation through $\pi/n$. These are exactly the two
orientations in which the prescribed reflection axis contains a vertex.
\end{proof}

\section{Area of the Reuleaux \texorpdfstring{$n$}{n}-gon}
For the sake of completeness, we compute the area of the minimizing shape $R_n(w)$.
\begin{proposition}
For odd $n\ge3$, the regular Reuleaux $n$-gon of width $w$ has area
\[
\operatorname{Area}(R_n(w))
=
\frac{w^2}{2}\left(\pi-n\tan\frac{\pi}{2n}\right).
\]
\end{proposition}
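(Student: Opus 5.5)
Two routes are available, and I would carry out the elementary geometric decomposition first, then check it against the Fourier machinery of the paper.

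\emph{Geometric route.} Let $P_n$ be the regular $n$-gon underlying the construction. Its circumradius is $r=\frac{w}{2\cos(\pi/(2n))}$, its central angle per side is $2\pi/n$, and its side length is $s=2r\sin(\pi/n)$. The body $R_n(w)$ is the union of $P_n$ and $n$ congruent circular segments. Each segment is cut off from a disk of radius $w$ by the chord $[V_j,V_{j+1}]$, and it lies on the far side of that chord from the centre $V_{j+(n+1)/2}$.

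The first step is the opening angle of each arc. Seen from the opposite vertex, the side subtends the inscribed angle $\pi/n$, since the arc $V_jV_{j+1}$ not containing the vertex measures $2\pi/n$. A sector of radius $w$ and angle $\pi/n$ has area $\frac{w^2}{2}\cdot\frac{\pi}{n}$. The triangle with apex at the opposite vertex has area $\frac12 w^2\sin(\pi/n)$. Hence each segment has area
\[
\frac{w^2}{2}\Bigl(\frac{\pi}{n}-\sin\frac{\pi}{n}\Bigr).
\]

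The polygon has area $\frac{n}{2}r^2\sin\frac{2\pi}{n}$. Substituting $r$ and writing $c=\cos(\pi/(2n))$, $s_1=\sin(\pi/(2n))$, we get $\sin(2\pi/n)=2\sin(\pi/n)\cos(\pi/n)$ and $\sin(\pi/n)=2s_1c$. Summing,
\[
\Area=\frac{w^2}{2}\Bigl(\pi-n\sin\frac{\pi}{n}\Bigr)+\frac{n w^2\sin(2\pi/n)}{8c^2}.
\]
It then suffices to verify the trigonometric identity
\[
-\sin\frac{\pi}{n}+\frac{\sin(2\pi/n)}{4c^2}=-\tan\frac{\pi}{2n}.
\]
The left side equals $2s_1c\bigl(-1+\frac{\cos(\pi/n)}{2c^2}\bigr)$. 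Using $2c^2-1=\cos(\pi/n)$, the bracket becomes $-\frac{1}{2c^2}$, so the left side is $-\frac{s_1}{c}$, as required.

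\emph{Analytic cross-check.} Alternatively, I would use $\Area=\frac{\pi w^2}{4}-\mathcal J(f)$ together with the set form $\mathcal J=\frac{w^2}{2\pi}\int_E\int_E H_n(x-y)\,\dd x\,\dd y$, where $E$ is a half-circle, and evaluate the double integral using the explicit $H_n$ from Lemma~\ref{lem:Hn-kernel}. This is a routine double integral of a sine, and it should reproduce $\frac{w^2}{2}\bigl(n\tan\frac{\pi}{2n}-\frac{\pi}{2}\bigr)$.

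The main obstacle is just bookkeeping. One must make sure the segments are added, not subtracted: each arc bulges outward away from $P_n$. One must also confirm that the arc's angle is $\pi/n$, which is the inscribed-angle step. The rest is the half-angle simplification above. As a sanity check, the formula at $n=3$ gives $\frac{w^2}{2}(\pi-\sqrt3)$, the classical Reuleaux triangle area.
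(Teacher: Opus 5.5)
Your geometric route is correct and is essentially the paper's own proof. Both write $R_n(w)$ as $P_n$ plus $n$ outward circular segments, each a sector of radius $w$ and angle $\pi/n$ minus the isosceles triangle, and then apply the same half-angle simplification with $\cos(2\alpha)=2\cos^2\alpha-1$. Your analytic cross-check is not needed, but it also works out: for a half-circle $E$ one gets $\int_E\int_E H_n(x-y)\,\dd x\,\dd y=2\int_0^\pi(\pi-t)H_n(t)\,\dd t=n\pi\bigl(\tan\frac{\pi}{2n}-\frac{\pi}{2n}\bigr)$, which yields the stated formula.
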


\begin{proof}
Let $V_0,\dots,V_{n-1}$ be the vertices of $R_n(w)$, and let $P_n$ denote its convex hull. Let
\(
\alpha\coloneqq \frac{\pi}{2n}.
\)
Since each boundary arc of $R_n(w)$ has radius $w$ and subtends angle $\pi/n=2\alpha$, the circumradius $R$ of $P_n$ is determined by
$w=2R\cos\alpha$,
hence
$R=\frac{w}{2\cos\alpha}$.

Body $R_n(w)$ is obtained from $P_n$ by attaching to each side a circular segment $S_n$ of radius $w$ and angle $\pi/n$. 
Therefore, 
\[
\operatorname{Area}(R_n(w))
=\operatorname{Area}(P_n)+n\,\operatorname{Area}(S_n).
\]
Since $P_n$ is a regular $n$-gon with circumradius $R$,
\(
\operatorname{Area}(P_n)=\frac{n}{2}R^2\sin\frac{2\pi}{n}
=\frac{n}{2}\cdot \frac{w^2}{4\cos^2\alpha}\,\sin(4\alpha).
\)
Then segment $S_n$ is a sector of radius $w$ and angle $\pi/n=2\alpha$, minus the isosceles triangle cut off by the corresponding chord. Hence
\(
\operatorname{Area}(S_n)
=\frac{w^2}{2}\cdot \frac{\pi}{n}-\frac{w^2}{2}\sin\frac{\pi}{n}.
\)
Therefore
\begin{align*}
\operatorname{Area}(R_n(w))
=\ \frac{n w^2}{8\cos^2\alpha}\sin(4\alpha)+\frac{\pi w^2}{2}-\frac{n w^2}{2}\sin(2\alpha).
\end{align*}
By $\sin(4\alpha)=4\sin\alpha\cos\alpha\cos(2\alpha)$ and $\sin(2\alpha)=2\sin\alpha\cos\alpha$, we obtain
\begin{align*}
\operatorname{Area}(R_n(w))
= \frac{\pi w^2}{2}+\frac{n w^2\sin\alpha}{2\cos\alpha}\bigl(\cos(2\alpha)-2\cos^2\alpha\bigr).
\end{align*}
The application of the the identity $\cos(2\alpha)=2\cos^2\alpha-1$ concludes the proof.
\end{proof}

As expected, for $n=3$, this gives
\(
\operatorname{Area}(R_3(w))
=\frac{w^2}{2}(\pi-\sqrt3).
\)

\subsection*{Acknowledgment}
This work was partially carried out at University of the Fraser Valley during a research visit by Javier Falcó, who gratefully acknowledges the support and hospitality of the Department of Mathematics and Statistics at UFV. The authors acknowledge the use of artificial intelligence–based tools for language editing and stylistic improvements
of the manuscript. The authors remain fully responsible for the content of this work.

\end{document}